\documentclass[10pt]{article}
\usepackage{amssymb,amsfonts,amsmath,amsthm,amsbsy}%
\usepackage[dvips]{graphicx,color}%
%
%
%
%
%
\newcommand{\figdraft}{false}%
\newcommand{\figfile}[1]{#1}%
%
%
%
%
\theoremstyle{plain}%
\newtheorem{theorem}{Theorem}[section]%
\newtheorem{corollary}[theorem]{Corollary}%
\newtheorem{lemma}[theorem]{Lemma}%
\newtheorem{assumption}[theorem]{Assumption}%
\newtheorem{remark}[theorem]{Remark}%
%
%
%
%
%

\newcommand{\sgn}{\mathrm{sgn}}

\newcommand{\fspace}[1]{{\mathsf{#1}}}
\newcommand{\fspaceL}{\fspace{L}}
\newcommand{\fspaceC}{\fspace{C}}
\newcommand{\fspaceW}{\fspace{W}}

\newcommand{\deq}{{:=}}

\newcommand{\ol}[1]{{\overline{#1}}}

\newcommand{\phase}{{\varphi}}

\newcommand{\Rset}{{\mathbb{R}}}

\newcommand{\Nset}{{\mathbb{N}}}

\newcommand{\ocinterval}[2]{(#1,\,#2]}%
\newcommand{\cointerval}[2]{[#1,\,#2)}%
\newcommand{\oointerval}[2]{(#1,\,#2)}%
\newcommand{\ccinterval}[2]{[#1,\,#2]}%


\newcommand{\R}{\mathbb{R}}

\newcommand{\Z}{\mathbb{Z}}

\newcommand{\rmd}{\mathrm{d}}
\newcommand{\rme}{\mathrm{e}}

\newcommand{\vph}{\varphi}


\newcommand{\sh}{{\rm sh}}

%


\newlength{\mhpicDwidth}
\newlength{\mhpicDvsep}
\newlength{\mhpicDhsep}

\newlength{\mhpicPwidth}
\newlength{\mhpicPvsep}
\newlength{\mhpicPhsep}

\newlength{\mhpicWhsep}

\setlength{\mhpicDhsep}{-2mm}%
\setlength{\mhpicDvsep}{0cm}%
\setlength{\mhpicWhsep}{+1mm}%
\setlength{\mhpicDwidth}{0.34\textwidth}%

\setlength{\mhpicPhsep}{-2mm}%
\setlength{\mhpicPvsep}{0cm}%
\setlength{\mhpicPwidth}{0.34\textwidth}%


\newcommand{\pair}[2]{{\left({#1},\,{#2}\right)}}
\newcommand{\skp}[2]{{\left\langle{#1},\,{#2}\right\rangle}}

\newcommand{\at}[1]{{\left({#1}\right)}}
\newcommand{\nat}[1]{(#1)}
\newcommand{\bat}[1]{{\big(#1\big)}}
\newcommand{\Bat}[1]{{\Big(#1\Big)}}

\newcommand{\ato}[1]{{\left[{#1}\right]}}

\newcommand{\bato}[1]{{\big[#1\big]}}


%

\newcommand{\ul}[1]{\underline{#1}}


\newcommand{\bigpar}{\par\quad\newline\noindent}

\newcommand{\wt}[1]{{\widetilde{#1}}}
\newcommand{\wh}[1]{{\widehat{#1}}}

\newcommand{\jump}[1]{{|\![#1]\!|}}
\newcommand{\mean}[1]{{\langle#1\rangle}}

\newcommand{\norm}[1]{\parallel\!{#1}\!\parallel}
\newcommand{\abs}[1]{\left|{#1}\right|}
\newcommand{\babs}[1]{\big|{#1}\big|}

\newcommand{\dint}[1]{\,\mathrm{d}#1}


\newcommand{\ga}{{\gamma}}

\newcommand{\eps}{{\varepsilon}}

\newcommand{\la}{{\lambda}}
\newcommand{\si}{{\sigma}}


\newcommand{\MaFld}[1]{\overline{#1}}

\newcommand{\MiTime}{t}
\newcommand{\MiSpace}{x}

\newcommand{\MiLagr}{\alpha}

\newcommand{\MaSpace}{\MaFld{\MiSpace}}

\newcommand{\MaTime}{\MaFld{\MiTime}}
\newcommand{\MaLagr}{\MaFld{\MiLagr}}

\newcommand{\MaLagrDer}[1]{\partial_{\,\MaLagr\,}{#1}}

\newcommand{\MaTimeDer}[1]{\partial_{\,\MaTime\,}{#1}}
%
%


\newcommand{\calA}{\mathcal{A}}

\newcommand{\calC}{\mathcal{C}}

\newcommand{\calH}{\mathcal{H}}

\newcommand{\calJ}{\mathcal{J}}

\newcommand{\calL}{\mathcal{L}}

\newcommand{\calO}{\mathcal{O}}

\newcommand{\calT}{\mathcal{T}}


%
%
%
%
\textheight22.5cm%
\topmargin0cm%
\oddsidemargin0cm%
\textwidth16cm%
\headsep0cm%
\headheight0cm%
%
%
%
%
\usepackage{float}%
\setcounter{topnumber}{10}%
\setcounter{totalnumber}{10}%
\setcounter{bottomnumber}{10}%
\sloppy
%
\begin{document}%
%
%
\title{Heteroclinic travelling waves in convex FPU-type chains}%
\date{\today}%
\author{Michael Herrmann\thanks{ %
    University of Oxford,
    Mathematical Institute, Centre for Nonlinear PDE (OxPDE),
    24-29 St Giles', Oxford OX1 3LB, United Kingdom, michael.herrmann@maths.ox.ac.uk.}
\and
Jens D.M. Rademacher\thanks{%
    National Research Centre for Mathematics and Computer Science (CWI),
    Science Park 123, 1098 XG Amsterdam, the Netherlands,  rademach@cwi.nl.}
}
\maketitle
%
%
%
\begin{abstract}%
We consider infinite FPU-type atomic chains with general convex
potentials and study the existence of monotone fronts that are
heteroclinic travelling waves connecting constant asymptotic states.
Iooss showed that small amplitude fronts bifurcate from
convex-concave turning points of the force. In this paper we prove
that fronts exist for any asymptotic states that satisfy
certain constraints. For potentials whose derivative has exactly one
turning point these constraints precisely mean that the front
corresponds to an energy conserving supersonic shock of the
`p-system', which is the naive hyperbolic continuum limit of the
chain. The proof goes via minimizing an action functional for the
deviation from this discontinuous shock profile. We also discuss
qualitative properties and the numerical computation of fronts.
\end{abstract}%
%
{Keywords:} %
\emph{Fermi-Pasta-Ulam chain}, %
\emph{heteroclinic travelling waves}, %
\emph{conservative shocks}, %
\emph{least action} %
\bigpar\noindent
MSC (2000): %
37K60, 
47J30, 
70F45, 
74J30 
%
%
%
%
\section{Introduction}
%
%
We consider infinite chains of identical particles as plotted in
Figure~\ref{IntroFig1}. These are nearest neighbour coupled in a
convex potential $\Phi:\R\to \R$ by Newton's equations
\begin{equation}
\label{e:FPU}%
\ddot x_\alpha = \Phi'(x_{\alpha+1} - x_\alpha) - \Phi'(x_\alpha -
x_{\alpha-1}),
\end{equation}
where $\dot{~} = \frac{\rmd}{\rmd t}$ is the time derivative,
$x_\alpha(t)$ the atomic position, $\alpha\in\Z$ the particle index.
\begin{figure}[ht!]%
  \centering{%
  \includegraphics[width=0.7\textwidth,draft=\figdraft]%
  {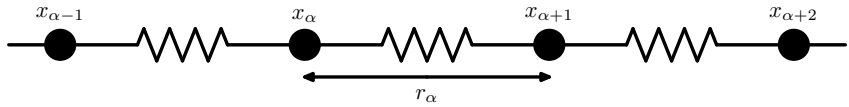}%
  }%
  \caption{The atomic chain with nearest neighbour interaction.}%
  \label{IntroFig1}%
\end{figure}%
\par
Such chains model particles connected by springs in one dimension
and serve as simplified models for crystals and solids. In their 
seminal paper \cite{FPU55} Fermi, Pasta and Ulam studied such chains
assuming that the interaction potential $\Phi$ contains at most quartic terms. 
We consider convex $\Phi$ with nonlinear force
function $\Phi^\prime$ and allow for turning points, i.e., points
where $\Phi^{\prime\prime\prime}=0$, but still refer to
\eqref{e:FPU} as FPU chains. In fact, the existence of fronts, which
is studied in this paper, requires that $\Phi^\prime$ has at least
one turning point, see \cite{HR08a}, and this excludes, for
instance, the famous Toda potential.
\par
One focus of the research concerns solutions with coherent
structures, in particular travelling waves, for which there exists a
smooth profile that travels with constant speed and shape through
the chain. The main types of travelling waves are periodic
\emph{wave trains}, homoclinic \emph{solitons} (or \emph{solitary
waves}), and heteroclinic \emph{fronts}.  Mathematically, rigorous
existence proofs of such solutions are an important basic issue.
During the last two decades a lot of research addressed the
existence of solitons and wave trains: \cite{FW94,FV99,Her05,DHM06}
establish the existence of such waves by solving constrained
optimisation problems, \cite{SW97,PP00,Pan05,SZ07} apply the
Mountain Path Theorem to the action integral for travelling waves,
and \cite{Ioo00} uses center manifold reduction with respect to the
spatial dynamics.
\par
In comparison, little is known rigorously for fronts. For
(non-smooth and non-convex) double-well potentials composed of the
same quadratic parabolas, the existence of fronts connecting
oscillatory states has been recently shown in \cite{SZ08}. Such
fronts can be interpreted as phase transitions and more physical
results can be found for instance in \cite{SCC05,TV05}.
\par
In these cases the connection between fronts and shocks in the naive
continuum limit of \eqref{e:FPU} was crucial. This limit is the
so-called p-system formed by the hyperbolic conservation laws for
mass and momentum. Shocks in the p-system come in different types
given by the relation of their speed and the sound speed of the
asymptotic states: shocks that are faster (slower) than these sound
speeds are called supersonic (subsonic). The fronts found in
\cite{SZ08} for the quadratic double-well case correspond to
subsonic shocks when taking the average of the asymptotic
oscillations. It turns out that these shocks also balance the energy
jump relation, and we refer to such shocks as conservative.
\bigpar
The connection to p-system shocks is also crucial for our result,
but we solely consider \emph{convex} potentials and the fronts we
find are \emph{supersonic}, \emph{conservative}, and \emph{monotone}
with constant asymptotic states, see Figure~\ref{IntroFig2}.  For
such potentials, the only previous result concerning fronts we are
aware of is the bifurcation result by Iooss in \cite{Ioo00} for
supersonic fronts of small amplitude connecting constant states near
a convex-concave turning point of $\Phi^\prime$. A physical
interpretation of these solutions is a deformation of the solid or
crystal, though the fronts can travel in either direction for the
same asymptotic states. Related `pulsating' travelling waves have
been studied in \cite{IJ05}. Here the profile of the wave changes
periodically while moving and the asymptotic states are wave trains.
It was shown that in the truncated normal form on the centre
manifold, pulsating front solutions bifurcate from convex-concave
turning points of $\Phi^\prime$.
\par
The analytical investigations in this paper are motivated by the
numerical simulations of atomistic Riemann problems which have
recently been studied by the authors in \cite{HR08a}. We observed
fronts in numerical simulations of \eqref{e:FPU} even for initial
data that are far from the data of a front, see
Figure~\ref{IntroFig2}. Hence, fronts are dynamically stable and
provide fundamental building blocks for atomistic Riemann solvers.
\begin{figure}[ht!]
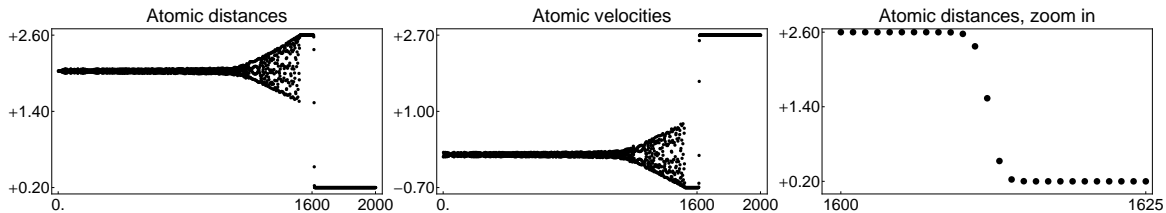
%
  \centering{%
  \includegraphics[width=0.32\textwidth, draft=\figdraft]%
  {\figfile{dist_ma}}%
  \includegraphics[width=0.32\textwidth, draft=\figdraft]%
  {\figfile{vel_ma}}%
  \includegraphics[width=0.32\textwidth, draft=\figdraft]%
  {\figfile{dist_mi}}%
  }%
  \caption{Fronts appear in the numerical simulations of FPU chains:
    snapshots of the atomic data with monotone front
    around particle $\alpha=1613$.}%
  \label{IntroFig2}%
\end{figure}%
\bigpar
The main result of this paper is the following.
\begin{theorem}
\label{t:main}%
For all convex and smooth potentials $\Phi$ the following assertions
are satisfied:
\begin{enumerate}
\item
Each front in the chain corresponds to a conservative shock in the
p-system.
\item
For each supersonic conservative shock in the p-system that
satisfies a certain area condition there exists a corresponding
monotone front in the chain.
\end{enumerate}
Here `correspond' means that shock and front have the same wave
speed and asymptotic states.
\end{theorem}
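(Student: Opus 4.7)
For the first assertion I would derive the jump relations directly. Writing $x_\alpha(t) = X(\alpha - ct)$ with strain profile $R(\xi) = X(\xi+1) - X(\xi)$ and velocity profile $V(\xi) = -cX'(\xi)$, the chain equations become
$$-cR'(\xi) = V(\xi+1) - V(\xi), \qquad -cV'(\xi) = \Phi'(R(\xi)) - \Phi'(R(\xi-1)).$$
Integrating each equation over $\R$ and using the asymptotic limits $(r_\pm, v_\pm)$ gives the p-system Rankine-Hugoniot relations $c(r_+ - r_-) + (v_+ - v_-) = 0$ and $c(v_+ - v_-) + (\Phi'(r_+) - \Phi'(r_-)) = 0$. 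Multiplying the second equation by $V$, performing an index shift in one of the resulting integrals, and integrating then yields in the same manner the additional energy jump relation $\tfrac12(v_+^2 - v_-^2) + (\Phi(r_+) - \Phi(r_-)) = 0$. Together these are precisely the defining relations of a conservative p-system shock.

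For the second assertion I would employ the direct method of the calculus of variations. The plan is to decompose the strain profile as $R = R_\star + u$, where $R_\star$ is the step function jumping between the prescribed asymptotic strains $r_\pm$ at $\xi = 0$ and $u$ is a square-integrable correction, and to recast the travelling wave equation as the Euler-Lagrange equation of an action functional $\calA[u]$ on a suitable Hilbert space of deviations. Its quadratic part is governed by $c^2$ and a discrete derivative, while the nonlinear part is built from $\Phi$ evaluated along the shifted profile $R_\star + u$. I would then establish that $\calA$ is weakly lower semicontinuous, bounded below, and coercive, and extract a minimiser along a minimising sequence.

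The main obstacle will be compactness and control of the asymptotics of the minimiser. Translation invariance along $\R$ and unboundedness of the configuration space mean that minimising sequences may drift, concentrate, or split; I would fix the drift by a normalisation such as demanding that the level $R = \tfrac12(r_- + r_+)$ is attained at $\xi = 0$, and then invoke a concentration-compactness style dichotomy. Here the hypotheses enter essentially: supersonicity makes the quadratic part of $\calA$ positive definite, while the area condition excludes the scenario in which a minimising sequence splits into two sub-fronts connecting through an intermediate state of lower action, thereby forcing the weak limit to attain the prescribed asymptotic values $r_\pm$.

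To obtain monotonicity of the resulting front I would either restrict the variational problem a priori to the closed convex cone of monotone deviations and check via the Euler-Lagrange equation that the constraint is inactive, or compare an arbitrary minimiser with its monotone rearrangement and exploit convexity of $\Phi$ together with a Pólya-Szegő type estimate to argue that rearrangement cannot increase $\calA$. Either route delivers a monotone heteroclinic travelling wave with the prescribed wave speed and asymptotic states, completing the correspondence asserted in the theorem.
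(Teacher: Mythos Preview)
Your derivation of part~1 is essentially the paper's (Lemma~\ref{Lemma:JumpCond}); the only refinement is that the paper integrates over $[-n,n]$ and passes to the limit, since the shifted differences need not lie in $L^1(\R)$ a priori.

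For part~2 the variational skeleton---action on $L^2$-deviations from the shock profile, direct method, pinning to remove translation---agrees with the paper, but the step you leave vague is precisely the one the paper works hardest on: why a minimiser constrained to the monotone cone solves the \emph{unconstrained} Euler--Lagrange equation. Neither of your two proposals is what is actually done. The rearrangement idea is problematic because the functional is nonlocal through the averaging operator $\calA$, so a P\'olya--Szeg\H{o} inequality is not readily available. And ``check via the Euler--Lagrange equation that the constraint is inactive'' is not a standard argument for an infinite-dimensional cone constraint. The paper's mechanism is different: the fixed-point operator $\calT[W]=\calA\Phi'(\calA W)$ maps the monotone cone $\calC$ into itself (both $\calA$ and the increasing map $\Phi'$ preserve monotonicity and the asymptotic values $\pm1$), hence the gradient flow $\dot W=-W+\calT[W]$ leaves $\calC$ invariant, and therefore any local minimiser in $\calC$ is stationary for the flow and satisfies $W=\calT[W]$ (Lemma~\ref{MinimisersAndFronts}). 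This restriction to $\calC$ is not an afterthought but is used throughout the compactness argument as well: monotonicity yields the comparison $|\calL-\calL_\#|\le 4$ and the uniform tail estimates needed for weak lower semicontinuity, so no concentration--compactness dichotomy is run. Finally, your attribution of the hypotheses is slightly off: in the paper the area condition gives coercivity on the pinned cone (via $g_\Phi>0$ on $(-1,1)$, Lemma~\ref{Props.Lemma.Bounds}), while supersonicity $\Phi''(\pm1)<1$ enters in the lower-semicontinuity proof (Theorem~\ref{EM.ExistenceTheo}), bounding the potential-energy defect in the tails by a factor $\zeta_0<1$ times the kinetic-energy defect and thereby forcing the weak limit to realise the infimum.
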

The precise assertions of this theorem depend on the set of turning
points of $\Phi^\prime$:
\begin{enumerate}
\item
If $\Phi^\prime$ has no turning points, then all shocks are
non-conservative and fronts do not exist.
\item
For strictly convex-concave $\Phi^\prime$ all conservative shocks
are supersonic and satisfy the area condition. In this case Theorem
\ref{t:main} implies that \emph{each} conservative shock can be
realised by an atomistic front. In particular, this allows for
arbitrary large jumps between the asymptotic states and extends 
the bifurcation result from \cite{Ioo00}.
\item
For strictly concave-convex $\Phi^\prime$ all conservative shocks
are subsonic and Theorem \ref{t:main} does not imply an existence
result for fronts.
\item
If $\Phi^\prime$ has more than one turning point, then Theorem
\ref{t:main} provides the existence of fronts for a proper subset of
all conservative shocks.
\end{enumerate}

We emphasise that Theorem \ref{t:main} does not imply the uniqueness
of fronts with prescribed asymptotic states. Moreover, not every
conservative shocks in the p-system corresponds to a front in the
FPU chain. For functions $\Phi^\prime$ with one turning point the
simulations in \cite{HR08a} rather suggest that only supersonic
shocks allow for an atomistic realization as a (dynamically stable)
front. This is supported by the bifurcation result in \cite{Ioo00},
which in fact disproves the existence of subsonic conservative
shocks with small jump heights, and our results, which imply that
subsonic shocks cannot minimize the action, compare the discussion
at the end of \S\ref{s:main}.
\par
We also mention the results about atomistic fronts by Aubry and
Proville. In \cite{AP07} they study FPU chains with concave
$\Phi^\prime$ -- so all shocks in the p-system are non-conservative
-- and allow for atomistic fronts by adding some damping force that
respects uniform motion. By numerical simulations they found
oscillations in the tails of such fronts and observed that these
oscillations persist when the damping tends to zero. We therefore
expect that non-conservative shock data can be related to
heteroclinic connections of wave trains.
\bigpar
The proof of the first part of Theorem~\ref{t:main} is fairly
straightforward and contained in Lemma~\ref{Lemma:JumpCond} below.
Proving the second part, which is mainly contained in
Theorem~\ref{EM.ExistenceTheo}, requires a number of preparatory
steps, and uses the convexity of $\Phi$, the supersonic front speed,
and the area condition in various fundamental steps. To guide the
reader we give an overview of the key ideas and steps for the proof.
\begin{enumerate}
\item
We use the a priori knowledge of the front speed in order to
reformulate the problem as a nonlinear fixed point equation for a
suitably normalised profile.
\item
We identify an action functional for the deviation from the
discontinuous shock profile such that the fixed point equation is
the corresponding Euler-Lagrange equation.
\item
We use the invariance of the set of monotone profiles
under the gradient flow of the Lagrangian to connect
stationary points in this set with fronts.
\item
We establish bounds for the action functional and use the direct
approach to show that the Lagrangian attains its minimum in the
set of monotone profiles.
\end{enumerate}
This paper is organised as follows. In \S\ref{s:set} we give a more
formal background for fronts and shocks of the p-system, and
\S\ref{s:main} contains the proof of the existence result. In
\S\ref{s:qualiter} we prove that these fronts decay exponentially to their
asymptotic states; the proof also applies to supersonic solitons
with monotone tails and extends the known results there. In addition, we discuss numerical computations
based on the gradient flow, and hereby illustrate the role of area
condition, supersonic front speed, and energy conservation.
%
\section{Fronts, shocks and normalisation}\label{s:set}
%
%
For our purposes it is convenient to use the atomic distances
$r_\MiLagr=x_{\MiLagr+1}-x_\MiLagr$ and velocities
$v_\MiLagr=\dot{x}_\MiLagr$ as the basic variables, changing
(\ref{e:FPU}) to the system
\begin{align*}%
\dot{r}_\MiLagr = v_{\MiLagr+1}-v_\MiLagr\;,\qquad \dot{v}_\MiLagr =
\Phi^{\prime}\at{r_\MiLagr} - \Phi^{\prime}\at{r_{\MiLagr-1}}.
\end{align*}
Travelling waves are exact solutions to the infinite chain
(\ref{e:FPU}) that depend on a single phase variable
$\phase=\alpha-\si{t}$, where the \emph{speed} $\si$ equals the
phase velocity. In terms of atomic distances and velocities
travelling waves can be written as
\begin{align*}
r_\MiLagr\at\MiTime=R\at{\phase},\qquad
v_\MiLagr\at\MiTime=V\at{\phase},
\end{align*}
where the profile functions $R$ and $V$
solve the advance-delay differential equations
\begin{align}%
\label{e:tw} %
\si\tfrac{\dint}{\dint\phase}R(\phase)+
V(\phase+1)-V(\phase)%
=0,\qquad
\si\tfrac{\dint}{\dint\phase}V(\phase)+
\Phi^\prime\bat{R\at{\phase}}-
\Phi^\prime\bat{R\at{\phase-1}}=0,
\end{align}
which imply the energy law
\begin{align}%
\label{e:tw.energy} %
\si\tfrac{\dint}{\dint\phase}
\Bat{\tfrac{1}{2}V^2\at\phase+\Phi\at{R\at\phase}}+
\Phi^\prime\bat{R\at\phase}V\at{\phase+1}-
\Phi^\prime\bat{R\at{\phase-1}}V\at{\phase}=0.
\end{align}
The travelling waves in our context are \emph{fronts}, which connect
two different constant asymptotic states for $\phase\to-\infty$ and
$\phase\to\infty$, i.e.,
\begin{align}%
\label{e:AsymptoticStates} %
R,\,V\in\fspaceC^1\cap\fspaceL^\infty
\quad\text{with}\quad%
R\at\phase\xrightarrow{\phase\to\pm\infty}r_\pm
\quad\text{and}\quad%
V\at\phase\xrightarrow{\phase\to\pm\infty}v_\pm,
\end{align}%
where $\fspaceL^p$ and $\fspace{C}^k$ denote the usual function
spaces on the real line.
The other variants of travelling waves mentioned in the introduction
are \emph{wave trains}, with periodic $R$ and $V$, and
\emph{solitons}, that are localised over a constant background
state, i.e., $r_+=r_-$ and $v_+=v_-$.
\par
In~\cite{Ioo00} it has been proven that fronts bifurcate from
turning points $r_*$ of $\Phi^\prime$ with $\Phi^{(4)}(r_*)\neq 0$
if and only if $\Phi^{(4)}(r_*)<0$. These fronts have strictly
monotone profiles that converge exponentially to their asymptotic
states. Our main existence result extends this in the sense that we
prove existence of fronts for any left and right states that satisfy
certain constraints and can have large amplitude.
\bigpar
As a first result we establish necessary conditions for the
existence of fronts, which reflect that each front converges to a
shock in a large-scale limit. These conditions turn out to be the
jump conditions for an energy conserving shock of the p-system (see
below), and show that the speed $\si$ of a front is completely
determined by its asymptotic states.
\par
For any atomic observable
$\psi=\psi\pair{r}{v}$ we
define the mean and the jump by
\begin{align*}
\mean\psi:=\tfrac{1}{2}\bat{\psi\pair{r_+}{v_+}+\psi\pair{r_-}{v_-}}
\quad\text{and}\quad%
\jump{\psi}:=\psi\pair{r_+}{v_+}-\psi\pair{r_-}{v_-},
\end{align*}
respectively, where $r_\pm$ and $v_\pm$ denote the asymptotic values
of a front, see \eqref{e:AsymptoticStates}.
\begin{lemma}%
\label{Lemma:JumpCond}
Suppose that $R$ and $V$ as in \eqref{e:AsymptoticStates} solve the
front equation \eqref{e:tw}. Then,
\begin{align}
\label{Theo:JumpCond.Eqn1}
\si\jump{r}+\jump{v}=0,\qquad
\si\jump{v}+\jump{\Phi^\prime\at{r}}=0,\qquad
\si\jump{\tfrac{1}{2}v^2+\Phi\at{r}}+\jump{\Phi^\prime\at{r}v}=0.
\end{align}
\end{lemma}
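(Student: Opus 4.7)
The plan is to integrate each of the three identities in \eqref{e:tw} and \eqref{e:tw.energy} over $[-L,L]$, to perform a single shift of the variable of integration in the advance-delay terms, and then to pass to the limit $L\to\infty$ using the asymptotic behaviour \eqref{e:AsymptoticStates}. This converts differences between shifted evaluations of bounded continuous functions into unit-length window integrals at $\pm\infty$, which carry the boundary values $r_\pm,v_\pm$.

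For the first equation in \eqref{e:tw}, the derivative term integrates to $\sigma[R(L)-R(-L)]\to\sigma\jump{r}$, while the change of variable $\psi=\phase+1$ yields
\[
\int_{-L}^{L}\bigl[V(\phase+1)-V(\phase)\bigr]\dint{\phase}
=\int_{L}^{L+1}V(\psi)\dint{\psi}-\int_{-L}^{-L+1}V(\psi)\dint{\psi},
\]
and the right-hand side tends to $v_{+}-v_{-}=\jump{v}$ by the continuity of $V$ and its asymptotic limits. The second equation in \eqref{e:tw} is treated identically, with $\Phi^\prime\circ R$ in place of $V$ and $\Phi^\prime(r_{\pm})$ in place of $v_{\pm}$ (obtained by continuity of $\Phi^\prime$). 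This establishes the first two relations of \eqref{Theo:JumpCond.Eqn1}.

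For the third relation I would exploit the key observation that, setting $f(\phase)\deq\Phi^\prime\bat{R(\phase)}V(\phase+1)$, one has $\Phi^\prime\bat{R(\phase-1)}V(\phase)=f(\phase-1)$, so exactly the same telescoping trick applies to the energy law \eqref{e:tw.energy}. Its derivative part contributes $\sigma\jump{\tfrac{1}{2}v^{2}+\Phi(r)}$ in the limit, and its advance-delay part gives
\[
\int_{L-1}^{L}f(\psi)\dint{\psi}-\int_{-L-1}^{-L}f(\psi)\dint{\psi}
\;\longrightarrow\;\Phi^\prime(r_{+})v_{+}-\Phi^\prime(r_{-})v_{-}=\jump{\Phi^\prime(r)v}.
\]

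There is no real obstacle here: the only mild care needed is the convergence of the unit-length window integrals at $\pm\infty$, which is immediate from $R,V\in\fspaceC^{1}\cap\fspaceL^{\infty}$ together with \eqref{e:AsymptoticStates} (the integrand is continuous and approaches its asymptotic value, and composition with the smooth $\Phi^\prime$ preserves both). The genuine content of the lemma is algebraic rather than analytic: the first two identities are the Rankine--Hugoniot conditions of the p-system, while the third adds energy conservation across the shock and, combined with the other two, is equivalent to the area condition $\jump{\Phi(r)}=\mean{\Phi^\prime(r)}\jump{r}$ that characterises conservative shocks.
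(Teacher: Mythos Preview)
Your proof is correct and follows essentially the same route as the paper: integrate each identity over a symmetric interval, convert the advance--delay differences into unit-length window integrals near $\pm\infty$ via a shift of variable, and pass to the limit using the asymptotic behaviour of $R$ and $V$. The paper sketches only the first jump condition and says the other two follow similarly, whereas you spell out the telescoping for the energy law explicitly; but the underlying argument is identical.
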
%
\begin{proof}%
Integrate \eqref{e:tw}$_{1}$ over a finite interval
$\ccinterval{-n}{n}$ gives
\begin{align*}
\si\at{R\at{n}-R\at{-n}}=
-\int\limits_{-n+1}^{n+1}{V\at{\phase}}\dint\phase
+\int\limits_{-n}^{n}{V\at{\phase}}\dint\phase
=
\int\limits_{-n}^{-n+1}{V\at{\phase}}\dint\phase-
\int\limits_{n}^{n+1}{V\at{\phase}}\dint\phase,
\end{align*}
and the limit $n\to\infty$ yields \eqref{Theo:JumpCond.Eqn1}$_1$.
The jump conditions \eqref{Theo:JumpCond.Eqn1}$_2$ and
\eqref{Theo:JumpCond.Eqn1}$_3$ follow similarly from
\eqref{e:tw}$_{2}$ and \eqref{e:tw.energy}.
\end{proof}%
We mention that the jump conditions \eqref{Theo:JumpCond.Eqn1} are
also derived in \cite{AP07} and that a similar method is used in
\cite{Ser07} to derive jump conditions from purely discrete front
equations.
%
%
%
\subsection{The p-system and conservative shocks}
%
%
The p-system can be formally derived from the FPU chain as a
continuum limit in the \emph{hyperbolic scaling} of the
\emph{microscopic} coordinates $\MiTime$ and $\MiLagr$. This scaling
bridges to the \emph{macroscopic} time $\MaTime=\varepsilon\MiTime$
and particle index $\MaLagr=\varepsilon\MiLagr$, where $0<\eps\ll1$
is the scaling parameter. It is natural to scale the atomic
positions in the same way, i.e. $\MaSpace=\varepsilon\MiSpace$,
which leaves atomic distances and velocities scale invariant.
\par
Substituting an ansatz of macroscopic fields $r(\MaTime,\MaLagr)$
and $v(\MaTime,\MaLagr)$ such that $r_\MiLagr\at\MiTime=
r\pair{\varepsilon\MiTime}{\varepsilon\MiLagr}$,
\begin{math}
v_\MiLagr\at\MiTime=%
v\pair{\varepsilon\MiTime}{\varepsilon\MiLagr}
\end{math} %
into \eqref{e:FPU} and taking the limit $\varepsilon\rightarrow0$
yields the macroscopic conservation laws for mass and momentum
\begin{align}%
\label{e:ColdModEqn} %
\MaTimeDer{r}-%
\MaLagrDer{v}=0,\qquad
\MaTimeDer{v}-%
\MaLagrDer{\Phi^\prime\at{r}}%
&=0,%
\end{align}
which is the aforementioned p-system. It is well known that the
p-system is hyperbolic if and only if $\Phi$ is convex, and that for
smooth solutions the energy is conserved via
\begin{align}
\notag
\MaTimeDer{}\at{\textstyle{\frac{1}{2}v^2}+\Phi\at{r}}-%
\MaLagrDer{}\at{v\,\Phi^\prime\at{r}}=0.%
\end{align}
Next we summarise some facts about shocks in the p-system. For more
details we refer to standard textbooks \cite{Smo94,Daf00,LeF02}, and
to \cite{HR08a} for conservative shocks.
\par
A shock connecting a left state $u_-=(r_-,v_-)$ to a right state
$u_+=(r_+,v_+)$ propagates with a constant shock speed $\si$ so that
$u_-$ and $u_+$ satisfy the Rankine-Hugeniot jump conditions for
mass and momentum \eqref{Theo:JumpCond.Eqn1}$_1$ and
\eqref{Theo:JumpCond.Eqn1}$_2$. Corresponding to the two
characteristic velocities of \eqref{e:ColdModEqn}  one distinguishes
between $1$-shocks with $\si<0$ and $2$-shocks with $\si>0$.
\par
An important observation is that for either convex or concave flux
$\Phi^\prime$ the jump conditions \eqref{Theo:JumpCond.Eqn1}$_1$ and
\eqref{Theo:JumpCond.Eqn1}$_2$ imply that the jump condition for the
energy \eqref{Theo:JumpCond.Eqn1}$_3$ must be violated, that means
\begin{align}%
\label{e:ColdDataEnJump}%
\si\jump{\tfrac{1}{2}v^2+\Phi\at{r}}+%
\jump{v\,\Phi^\prime\at{r}}\neq0.
\end{align}
For non-convex or non-concave $\Phi^\prime$, however, it is possible
to find \emph{conservative shocks} that satisfy all three jump
conditions from \eqref{Theo:JumpCond.Eqn1}, see Theorem 5.1 in
\cite{HR08a}. Eliminating $\jump{v}$ and $\sigma$ in
\eqref{Theo:JumpCond.Eqn1} shows that each conservative shock
satisfies
\begin{equation}\label{e:ConsJump}
\calJ(r_-,r_+):= \jump{\Phi(r)} - \jump{r}\langle{\Phi'(r)}\rangle
=0.
\end{equation}
Conversely, for each solution to \eqref{e:ConsJump} there exist, up
to Galilean transformations, exactly two corresponding conservative
shocks which differ only in $\sgn\jump{v}=-\sgn\,{\si}$ and
propagate in opposite directions. The geometric interpretation of
\eqref{e:ConsJump} is that the signed area between the graphs of
$\Phi'$ and the secant line through $\Phi'(r_-)$ and
$\Phi^\prime(r_+)$ is zero over $[r_-,r_+]$, compare
Figure~\ref{fig:forces} below.
\par
For the harmonic potential $\Phi\at{r}=\tfrac{c}{2}{r^2}$ the
p-system is linearly degenerate, hence all shocks are conservative.
In the nonlinear case, however, the solution set to
\eqref{e:ConsJump} is either empty or the union of curves in the
$(r_-,r_+)$-plane, see Figure~\ref{fig:consShock} for an example. In
particular, precisely from turning points of $\Phi'$ there bifurcate
curves of conservative shocks, see \cite{HR08a}.
\begin{figure}[t!]%
\centering{ %
\setlength{\tabcolsep}{0cm}%
\begin{minipage}[c]{\mhpicDwidth}%
\includegraphics[width=\mhpicDwidth, draft=\figdraft]%
{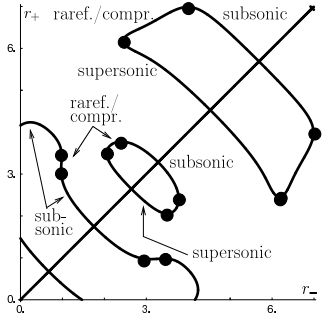}%
\end{minipage}%
}%
\caption{%
  Conservative shocks for the potential $\Phi(r+1) = r + \frac 1 2r^2
  + \frac 1 {20}r^3 - \frac 1 4 \cos(2r) + \frac 1 {10} \sin(3r )$:
  solid lines form the solution set to $\calJ(r_-,r_+)=0$, i.e., the
  distance data for conservative shocks; bullets lie at local extrema
  of the curves in the coordinate directions, and are the points where
  the shock type changes.} %
\label{fig:consShock}%
\end{figure}%
\bigpar%
Shocks come in different types determined by the relation of the
shock speed $\si$ and the sound speeds
$\lambda(r):=\pm\sqrt{\Phi''(r)}$ evaluated in $r_-$ and $r_+$,
where the signs of both the sound speed and the shock speed are
taken negative (positive) when considering 1-shocks (2-shocks). More
precisely, a shock connecting $u_-$ to $u_+$ with speed $\si$ is
called
\begin{enumerate}
\item
\emph{compressive}, or \emph{Lax shock}, if
$\lambda\at{r_{-}}>\si>\lambda\at{r_{+}}$,
\item
\emph{rarefaction shock}, if
$\lambda\at{r_{-}}<\si<\lambda\at{r_{+}}$,
\item
\emph{supersonic}, or \emph{fast undercompressive}, if
$\abs{\si}>\abs{\lambda\at{r_{-}}}$ and
$\abs{\si}>\abs{\lambda\at{r_{+}}}$,
\item
\emph{subsonic}, or \emph{slow undercompressive}, if
$\abs{\si}<\abs{\lambda\at{r_{-}}}$ and
$\abs{\si}<\abs{\lambda\at{r_{+}}}$,
\item
\emph{sonic}, if $\abs{\si}=\abs{\lambda\at{r_{-}}}$ or
$\abs{\si}=\abs{\lambda\at{r_{+}}}$.
\end{enumerate}
For all sufficiently small jump heights
$\abs{r_+-r_-}+\abs{v_+-v_-}$ one can show that compressive and
rarefaction shocks have negative and positive, respectively, energy
production in \eqref{e:ColdDataEnJump}. Consequently, all
conservative shocks with small jump height are either sonic or
undercompressive.  The conservative shocks bifurcating from turning
points $r_*$ of $\Phi^\prime$ are supersonic if $\Phi^{(4)}(r_*)<0$
and subsonic if $\Phi^{(4)}(r_*)>0$, see \cite{HR08a}. %
%
%
%
%
%
\subsection{Normalisation and reformulation}
%
%
Next we show that, after a suitable renormalisation, the front
equations for $R$ and $V$ are equivalent to a nonlinear fixed point
equation for a normalised profile $W$ with asymptotic states $\pm
1$. To this end we introduce
the averaging operator
\begin{align*}
\at{\calA{U}}\at\phase:=
\int\limits_{\phase-\tfrac{1}{2}}^{\phase +
\tfrac{1}{2}} U\at{\tilde\phase}\dint{\tilde\phase},
\end{align*}
which satisfies
\begin{align}
\label{Def.AvOp.Props}%
\tfrac{\dint}{\dint\phase}\at{\calA{U}}=
\nabla{U}:=%
U\at{\cdot+\tfrac{1}{2}}-U\at{\cdot-\tfrac{1}{2}},
\qquad%
\lim\limits_{\phase\to\pm\infty}\at{\calA{U}}\at\phase=
\lim\limits_{\phase\to\pm\infty}{U}\at\phase.
\end{align}
For given asymptotic front states $\pair{r_\pm}{v_\pm}$ we further
introduce a normalised potential $\wh{\Phi}$ by
\begin{align*}
\wh{\Phi}\at{w}=\frac{4}{\jump{\Phi^\prime\at{r}}\jump{r}}
{\Phi}\Bat{\mean{r}+\tfrac{1}{2}\jump{r}\,w}
-\frac{2\mean{\Phi^\prime\at{r}}}{\jump{\Phi^\prime\at{r}}}\,{w},
\end{align*}
which is convex on $\ccinterval{-1}{1}$ and strictly convex if
$\Phi$ is.  Moreover, the jump conditions from
\eqref{Theo:JumpCond.Eqn1} imply $\Phi^\prime\at{\pm1}=\pm1$ and
$\wh{\Phi}\at{1}=\wh{\Phi}\at{-1}$, and the shock corresponding to
$\pair{r_\pm}{v_\pm}$ is supersonic if and only if
$\wh{\Phi}^{\prime\prime}\at{1}<1$ and
$\wh{\Phi}^{\prime\prime}\at{-1}<1$.
\begin{lemma}
\label{Norm.Problem}%
For fixed asymptotic states $\pair{r_\pm}{v_\pm}$ there is a
one-to-one correspondence between front solutions to \eqref{e:tw}
and solutions $W$ to
\begin{align}
\label{Norm.Problem.Eqn}
W=\calA\wh{\Phi}^\prime\at{\calA{W}},
\end{align}
with $W\at\phase\xrightarrow{\phase\to\pm\infty}\pm1$.
\end{lemma}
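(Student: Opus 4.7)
The plan is to reduce the coupled system \eqref{e:tw} to a single equation for a suitably normalised profile. First, I would exploit the identity $U\at{\phi+1}-U\at{\phi}=\tfrac{\ddiff}{\ddiff\phi}\calA U\at{\phi+\ui}$ (and its shifted counterpart), which follows from \eqref{Def.AvOp.Props}, to integrate both equations in \eqref{e:tw} once. Introducing the shifted profile $\tilde R\at\phi:=R\at{\phi-\ui}$ to align the averaging windows and fixing the integration constants by sending $\phi\to-\infty$ using \eqref{e:AsymptoticStates}, the system \eqref{e:tw} becomes equivalent to
\begin{align*}
\sigma\tilde R+\calA V=\sigma r_-+v_-,\qquad \sigma V+\calA\Phi^\prime\at{\tilde R}=\sigma v_-+\Phi^\prime\at{r_-}.
\end{align*}

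Next I would introduce the normalisation $W:=2\at{V-\mean{v}}/\jump{v}$, which satisfies $W\at\phi\to\pm1$ as $\phi\to\pm\infty$ since $V\to v_\pm=\mean{v}\pm\tfrac{1}{2}\jump{v}$. Using the jump conditions from Lemma~\ref{Lemma:JumpCond}, in particular $\jump{v}=-\sigma\jump{r}$ and $\jump{\Phi^\prime\at r}=\sigma^2\jump{r}$, the right-hand sides above may be rewritten entirely in terms of means $\mean{r},\mean{v},\mean{\Phi^\prime\at r}$. Solving the first integrated equation then yields the clean relation $\tilde R=\mean{r}+\tfrac{1}{2}\jump{r}\calA W$. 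Substituting this into the second equation and using the identity $\Phi^\prime\bat{\mean{r}+\tfrac{1}{2}\jump{r}w}=\mean{\Phi^\prime\at r}+\tfrac{1}{2}\jump{\Phi^\prime\at r}\wh\Phi^\prime\at w$, which is obtained by differentiating the definition of $\wh\Phi$, all constant terms cancel by the remaining jump conditions. After dividing by $\tfrac{1}{2}\jump{\Phi^\prime\at r}\neq0$ one arrives at precisely the fixed point equation \eqref{Norm.Problem.Eqn}.

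For the converse, given a bounded $W$ satisfying \eqref{Norm.Problem.Eqn} with $W\to\pm1$, one simply \emph{defines} $V:=\mean{v}+\tfrac{1}{2}\jump{v}W$, $\tilde R:=\mean{r}+\tfrac{1}{2}\jump{r}\calA W$, and $R\at\phi:=\tilde R\at{\phi+\ui}$. The asymptotic states \eqref{e:AsymptoticStates} then follow from the smoothing properties of $\calA$ and the assumption $W\to\pm1$, the integrated system above holds by reversing the algebra, and differentiating with respect to $\phi$ recovers the original advance-delay system \eqref{e:tw}. The only real subtlety is the half-integer shift needed to translate between the non-centred finite differences that appear in \eqref{e:tw} and the centred averaging operator $\calA$; this is a matter of careful bookkeeping rather than a conceptual obstacle, as the convexity of $\Phi$ and the value of $\sigma$ play no role in the reduction itself.
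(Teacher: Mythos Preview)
Your proposal is correct and follows essentially the same route as the paper: integrate \eqref{e:tw} once using the identity $\tfrac{\ddiff}{\ddiff\phi}\calA U=\nabla U$, introduce the half-shifted distance profile (your $\tilde R$ is the paper's $\wh{R}$) and the velocity normalisation $V=\mean{v}+\tfrac{1}{2}\jump{v}W$, and then eliminate $\tilde R$ to arrive at \eqref{Norm.Problem.Eqn}. The only cosmetic difference is that the paper determines the integration constants by passing to \emph{both} limits $\phi\to\pm\infty$ (thereby re-deriving the first two jump conditions in the process), whereas you fix them via $\phi\to-\infty$ alone and then invoke Lemma~\ref{Lemma:JumpCond}; either way yields the same relations $\tilde R=\mean{r}+\tfrac{1}{2}\jump{r}\calA W$ and, after the $\wh\Phi'$ identity you correctly state, the fixed point equation.
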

\begin{proof}
Suppose that $R$ and $V$ solve \eqref{e:tw}. Then there exists a
normalised profile $W$ such that
\begin{align}
\label{Norm.Problem.Proof1}
V=\mean{v}+\tfrac{1}{2}\jump{v}{W}.
\end{align}
By \eqref{Def.AvOp.Props} the first equation from \eqref{e:tw} is
equivalent to
\begin{align*}
-\si\wh{R}=\calA{V}+c=\mean{v}+\tfrac{1}{2}\jump{v}\calA{W}+c
\end{align*}
with  $\wh{R}\at{\phase}=R\at{\phase-\tfrac{1}{2}}$ and an
integration constant $c$. Passing to $\phase\to-\infty$ and
$\phase\to\infty$ we find
\begin{align}
\notag
-\si\jump{r}=\jump{v},\quad{c}=-\si\mean{r}-\mean{v},
\end{align}
and this gives
\begin{align}
\label{Norm.Problem.Proof2}
\wh{R}=\mean{r}+\tfrac{1}{2}\jump{r}{\calA{W}}.
\end{align}
The second equation from \eqref{e:tw} transforms into
\begin{align}
\label{Norm.Problem.Proof4}
-\si{V}=\calA{\Phi^\prime}\nat{\wh{R}}+d,
\end{align}
and inspecting the asymptotic values we find
\begin{align}
\label{Norm.Problem.Proof5}
-\si\jump{v}=\jump{\Phi^\prime\at{r}},\quad{d}
=%
-\si\mean{v}-\mean{\Phi^\prime\at{r}}.
\end{align}
Inserting \eqref{Norm.Problem.Proof1} and
\eqref{Norm.Problem.Proof2} into \eqref{Norm.Problem.Proof4} gives
\begin{align*}
W=
-\frac{2}{\si\jump{v}}
\calA\Bat{{\Phi^\prime\at{\mean{r}+
\tfrac{1}{2}\jump{r}\calA{W}}}-\mean{\Phi^\prime\at{r}}},
\end{align*}
which is \eqref{Norm.Problem.Eqn} due to
\eqref{Norm.Problem.Proof5}. Finally, it is easy to see that each
solution to \eqref{Norm.Problem.Eqn} determines a front via
\eqref{Norm.Problem.Proof1} and \eqref{Norm.Problem.Proof2}.
\end{proof}
The equivalence between the lattice equation \eqref{e:tw} for
travelling waves and the nonlinear eigenvalue problem
$\si^2{W}=\calA\wh{\Phi}^{\prime}\at{\calA{W}}$ is well established
for wave trains and solitons, compare for instance \cite{FW94,FV99},
but in the case of fronts we have not seen this formulation before.
An important difference is that for fronts the speed $\si$ is
completely determined by the asymptotic states and can hence be
removed from the problem.
\bigpar
We conclude with some remarks concerning the regularity of fronts.
Suppose that $W\in\fspaceL^\infty$ solves \eqref{Norm.Problem.Eqn}.
Using \eqref{Def.AvOp.Props}, compare also Lemma \ref{Props.AvOp}
below, and the smoothness of $\Phi$ we find
\begin{align}\label{e:wpp}
\tfrac{\dint^2}{\dint\phase^2}{W}=
\nabla\bato{\wh{\Phi}^{\prime\prime}\at{\calA{W}}\nabla{W}},
\end{align}
and hence $W\in\fspace{W}^{2,\infty}\subset\fspaceC$. Moreover, we
infer that $W\in\fspaceC^2$ and, differentiating \eqref{e:wpp}
further, that $W$ is as smooth as $\Phi$. Finally, exploiting
\eqref{Norm.Problem.Proof1} and \eqref{Norm.Problem.Proof2} we
arrive at the following result and refer to \S\ref{s:qual} for
further qualitative properties of fronts.
\begin{remark}
\label{Rem:Regularity} Each solution $W\in\fspaceL^\infty$ to 
\eqref{Norm.Problem.Eqn} is contained in $\fspace{C}^2$ and provides
a smooth solution to \eqref{e:tw} with at least
$R,V\in\fspace{C}^2\cap\fspaceL^\infty$.
\end{remark}
%
%
%
%
%
\section{Proof of the existence result}\label{s:main}
%
\newcommand{\cond}[1]{(#1)}
In order to prove the existence of monotone solutions $W$ to the
fixed point problem \eqref{Norm.Problem.Eqn} we consider only
normalised potentials $\Phi=\wh{\Phi}$ and rely on the following
standing assumption. This involves the function
\begin{align}
\label{Eqn:DefG}
g_\Phi\at{w}
\deq%
\int\limits_{-1}^{w}v-\Phi^\prime\at{v}\dint{v}
=%
\Phi\at{-1}-\Phi\at{w}+\tfrac{1}{2}w^2-\tfrac{1}{2},
\end{align}
which measures the signed area between the identity and the graph of
$\Phi^\prime$, see Figure \ref{fig:forces}. 
\begin{assumption}
\quad \label{MainAss}
The potential $\Phi$ satisfies
\begin{enumerate}
\item[\cond{R}]%
regularity: $\Phi$ is twice continuously differentiable on
$\ccinterval{-1}{1}$,
\item[\cond{N}]%
normalisation: $\Phi^\prime\at{-1}=-1$ and
$\Phi^\prime\at{1}=1$,
\item[\cond{C}]%
convexity: $\Phi^{\prime\prime}$ is nonnegative on
$\ccinterval{-1}{1}$,
\item[\cond{E}]%
conservation of energy: $\Phi\at{-1}=\Phi\at{1}$,
\item[\cond{S}]%
supersonic front speed: $\Phi^{\prime\prime}\at{-1}<1$ and
$\Phi^{\prime\prime}\at{1}<1$.
\item[\cond{A}]%
positive signed area:
$g_\Phi\at{w}>0$ for all $w\in\oointerval{-1}{1}$.
\end{enumerate}
\end{assumption}
We mention that \cond{N} and \cond{R} are made for convenience,
\cond{E} is necessary according to Lemma \ref{Lemma:JumpCond}, and
\cond{C} is needed for the monotonicity of fronts.  The status of
\cond{S} and \cond{A} is more delicate: If $\Phi^\prime$ has only
one turning point, then \cond{S} coincides with the bifurcation
condition in \cite{Ioo00} and implies \cond{A}, see Remark
\ref{MainAss.Rem} below. If $\Phi^\prime$ has more than one turning
point, then \cond{A} is independent of \cond{S}. In any case both
\cond{S} and \cond{A} are closely related to the existence of
action-minimising fronts as discussed in more detail at
the end of \S\ref{s:main} and illustrated in \S\ref{s:num}.
\begin{figure}[ht!]
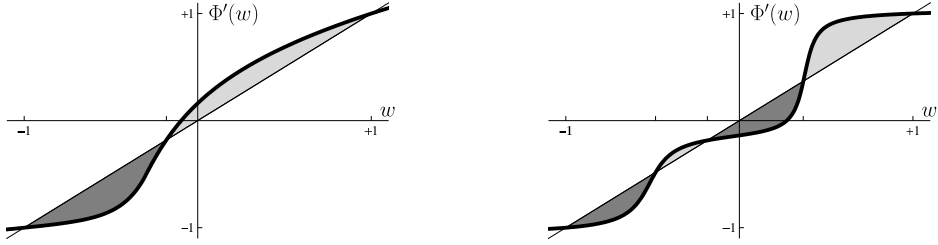
%
\centering{%
\includegraphics%
[width=0.3\textwidth, draft=\figdraft]%
{\figfile{forces_1}}%
\hspace{0.15\textwidth}%
\includegraphics%
[width=0.3\textwidth, draft=\figdraft]%
{\figfile{forces_2}}%
}%
\caption{%
  Two examples for force functions $\Phi^\prime$ that satisfy
  Assumption \ref{MainAss}; the first one is
  prototypical for the supersonic fronts with small amplitudes
  that bifurcate from convex-concave turning points.
  Condition \cond{A} precisely means that for each $-1<w<1$
  the signed area between the identity and the graph of $\Phi^\prime$ is
  positive in the stripe $\ccinterval{-1}{w}$;  the
  signed area vanishes in the stripe $\ccinterval{-1}{1}$
  if and only if the corresponding p-system shock conserves
  the energy via \cond{E}.
}%
\label{fig:forces}%
\end{figure}%
\bigpar 
Recall that turning points of $\Phi^\prime$ are exactly
those points in which $\Phi^{\prime\prime}$ changes its
monotonicity. More precisely, convex-concave (concave-convex)
turning points correspond to local maximisers (minimiser) of
$\Phi^{\prime\prime}$. We now summarize the main observations
concerning the turning points of $\Phi^{\prime}$.
\begin{remark}
\label{MainAss.Rem} %
Suppose that \cond{N}, \cond{R}, and $\cond{S}$ are satisfied. Then
$\Phi^\prime$ has at least one convex-concave turning point in
$\oointerval{-1}{1}$. Moreover, if $\Phi^\prime$ has no further
turning point in $\oointerval{-1}{1}$ then \cond{E} implies
\cond{A}.
\end{remark}
\begin{proof}
Suppose for contradiction that
$\max{\,}\Phi^{\prime\prime}|_{\ccinterval{-1}{1}}\leq1$. Then, for
all $w\in\ccinterval{-1}{1}$ we find
$g_\Phi^{\prime\prime}\at{w}=1-\Phi^{\prime\prime}\at{w}\geq0$ and
hence
\begin{math}
0=g_\Phi^\prime\at{-1}
\leq
{g_\Phi}^\prime\at{w}\leq{g_\Phi}^\prime\at{+1}=0.
\end{math} %
This implies $g_\Phi^{\prime}\at{w}=0$ for all
$w\in\ccinterval{-1}{1}$, and hence a contradiction to \cond{S}. Therefore,
 $\Phi^{\prime\prime}$ possesses a local maximiser
$w\in\oointerval{-1}{1}$, which is a convex-concave turning point of
$\Phi^\prime$.
\par
Towards \cond{A} suppose additionally \cond{E} and that
$\Phi^\prime$ is convex-concave in $\ccinterval{-1}{1}$. Then,
$g_\Phi\at{1}=g_\Phi\at{-1}=0$, and the monotonicity properties of
$\Phi^{\prime\prime}$ imply that there exist
$-1\leq{w_1}\leq{w_2}\leq{1}$ such that
$\Phi^{\prime\prime}\at{w}\leq{1}$ and
$\Phi^{\prime\prime}\at{w}\geq{1}$ for all
$w\in{I_1}=\ccinterval{-1}{w_1}\cup\ccinterval{w_2}{1}$ and
$w\in{I_2}=\ccinterval{w_1}{w_2}£$, respectively. In particular,
$g_{\Phi}$ is convex in $I_1$ but concave in $I_2$, and combining
this with $0=g_\Phi\at{\pm1}=g_\Phi^\prime\at{\pm1}$
and $g_\Phi^{\prime\prime}\at{\pm1}>0$,
we readily find $g_\Phi\at{w}>0$
for all $w\in\oointerval{-1}{+1}$.
\end{proof}
%
%
%
\subsection{The action functional}
%
%
In this section we cast the problem in a variational framework and
show that
\begin{align*}
W=\calT\ato{W},\qquad\calT\ato{W} := \calA\Phi^\prime\at{\calA{W}},
\end{align*}
is the Euler-Lagrange equation for an action integral. To this
end we introduce the affine Banach space
\begin{align*}
\calH:=\Big\{%
\text{functions $W:\Rset\to\Rset$ such that $\wt{W}=W_\sh-W\in\fspaceL^2$}
\Big\},
\end{align*}
where the reference profile
\begin{math}
W_\sh\at{\phase}=\sgn\at\phase
\end{math}
is the shock profile that connects $-1$ to $1$. We also define the
Lagrangian on $\calH$ by
\begin{align}
\label{Def.Lagr}
\calL\at{W}=\int\limits_\Rset
L\pair{W}{\calA{W}}-L\pair{W_\sh}{\calA{W_\sh}}\dint\phase,\quad
L\pair{v}{r}=\tfrac{1}{2}v^2-\Phi\at{r},
\end{align}
and a formal calculation shows
$\partial\calL\ato{W}=W-\calT\ato{W}$. Notice that the ansatz
$\wt{W}\in\fspaceL^2$ arises naturally when computing the asymptotic
behaviour for $\phase\to\pm\infty$. In fact, since $\calA{\wt{W}}$
decays like $\wt{W}$ and due to $\Phi^\prime\at{\pm1}=-1$ the
integrand in \eqref{Def.Lagr} behaves like
$\frac{1}{2}\at{1-\Phi^{\prime\prime}\at{\pm1}}\wt{W}\at{\phase}^2$.

\bigpar%
In the remainder of this section we show that both $\calL$ and
$\calT$ are well-defined on $\calH$. We start with some properties
of the averaging operator $\calA$ that are proven in \cite{Her09}.
\begin{lemma}
\label{Props.AvOp}
The linear operator $\calA$ is well-defined on $\fspaceL^p$,
$1\leq{p}\leq\infty$, and has the following properties:
\begin{enumerate}
\item
${\calA}$ maps into $\fspaceW^{1,\,p}$ with
\begin{math}
\at{{\calA}{{W}}}^\prime\at\phase=
{W}\at{\phase+1/2}-{W}\at{\phase-1/2},
\end{math}
\item
${\calA}$ maps $\fspaceL^2$ into $\fspaceL^2\cap{}\fspaceL^\infty$
with
\begin{math}
\norm{{\calA}{\wt{W}}}_\infty\leq\norm{{\wt{W}}}_2
\end{math}
and
\begin{math}
\norm{{\calA}{\wt{W}}}_2\leq\norm{{\wt{W}}}_2
\end{math},
\item
${\calA}$ is self-adjoint on $\fspaceL^2$,
\item
If a sequence $\nat{\wt{W}_n}_n$ converges weakly in $\fspaceL^2$ to
some limit $\wt{W}_\infty$, then $\nat{{\calA}{\wt{W}}_n}_n$
converges strongly and pointwise to ${\calA}{\wt{W}}_\infty$ in
$\fspaceL^2\nat{\ccinterval{-M}{M}}$ for each $M<\infty$.
\end{enumerate}
\end{lemma}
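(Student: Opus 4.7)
The key observation is that $\calA$ is a convolution operator, namely $\calA U = \chi\ast U$ with $\chi=\chi_{\ccinterval{-1/2}{1/2}}$. With this in hand, items (i)--(iii) become routine; item (iv) is the only one requiring any real idea, and this is where I would concentrate attention.

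For item (i), Young's convolution inequality gives $\calA:\fspaceL^p\to\fspaceL^p$ with $\norm{\calA U}_p\leq\norm{\chi}_1\norm{U}_p=\norm{U}_p$, and writing $\at{\calA U}\at{\phase}=\int_{-\infty}^{\phase+1/2}U-\int_{-\infty}^{\phase-1/2}U$ the Lebesgue differentiation theorem yields the stated derivative formula almost everywhere, hence $\calA U\in\fspaceW^{1,p}$. Item (ii) is just the $p=2$ case of the above, together with the pointwise Cauchy--Schwarz bound $\abs{\at{\calA\wt{W}}\at{\phase}}\leq\norm{\chi_{\ccinterval{\phase-1/2}{\phase+1/2}}}_2\norm{\wt{W}}_2=\norm{\wt{W}}_2$. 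Item (iii) follows from Fubini and the symmetry $\chi\at{-\cdot}=\chi$: indeed $\skp{\calA U}{V}_{\fspaceL^2}=\int\int\chi\at{\phase-\tilde\phase}U\at{\tilde\phase}V\at{\phase}\dint{\tilde\phase}\dint{\phase}=\skp{U}{\calA V}_{\fspaceL^2}$.

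The crux is item (iv). For pointwise convergence I would rewrite
\begin{align*}
\at{\calA\wt{W}_n}\at\phase
=\int\limits_{\phase-1/2}^{\phase+1/2}\wt{W}_n\at{\tilde\phase}\dint{\tilde\phase}
=\skp{\wt{W}_n}{\chi_{\ccinterval{\phase-1/2}{\phase+1/2}}}_{\fspaceL^2},
\end{align*}
which converges to $\at{\calA\wt{W}_\infty}\at\phase$ by the very definition of weak convergence, since the test function $\chi_{\ccinterval{\phase-1/2}{\phase+1/2}}$ lies in $\fspaceL^2$. For the strong $\fspaceL^2\at{\ccinterval{-M}{M}}$ convergence, I would invoke an Arz\'ela--Ascoli argument on $\ccinterval{-M}{M}$: weakly convergent sequences are norm-bounded, so by (ii) the sequence $\nat{\calA\wt{W}_n}_n$ is uniformly bounded in $\fspaceL^\infty$, and by Cauchy--Schwarz
\begin{align*}
\babs{\at{\calA\wt{W}_n}\at{\phase_2}-\at{\calA\wt{W}_n}\at{\phase_1}}
\leq 2\sqrt{\abs{\phase_2-\phase_1}}\,\sup_n\norm{\wt{W}_n}_2,
\end{align*}
so the sequence is equicontinuous. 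Thus it converges uniformly on $\ccinterval{-M}{M}$ along a subsequence, and by uniqueness of the pointwise limit the full sequence converges uniformly, which in particular gives strong $\fspaceL^2\at{\ccinterval{-M}{M}}$ convergence.

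The only conceptual obstacle is recognising that weak convergence plus the built-in smoothing of $\calA$ (derivative bounded by shifted copies of $\wt{W}_n$, hence equicontinuity of $\calA\wt{W}_n$ independently of $n$) combine via Arz\'ela--Ascoli to upgrade weak to strong convergence on compacts; everything else reduces to standard convolution estimates and Fubini.
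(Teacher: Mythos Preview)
Your argument is correct. The paper does not actually prove this lemma; it merely states the result and refers to \cite{Her09} for the proof, so there is no in-paper argument to compare against. Your convolution viewpoint for (i)--(iii) and the Arzel\`a--Ascoli upgrade for (iv) are exactly the natural route, with one cosmetic point: writing $\at{\calA U}\at{\phase}=\int_{-\infty}^{\phase+1/2}U-\int_{-\infty}^{\phase-1/2}U$ is not literally well-defined for general $U\in\fspaceL^p$, but replacing $-\infty$ by any fixed finite basepoint (using local integrability) gives the same derivative formula and fixes this.
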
%
Our next result shows $\calT\ato{W}-W\in\fspaceL^2$ for all
$W\in\calH$ and implies that $\calT:\calH\to\calH$ is well-defined.
\begin{lemma}
%
For any $W\in\calH$ we have
\begin{align}
\label{Props.L2Estimates.Eqn1}
W-\Phi^\prime\at{\calA{W}}\;\in\;\fspaceL^2,\qquad
{W}-\calA\Phi^\prime\at{\calA{W}}\;\in\;\fspaceL^2,
\end{align}
as well as
\begin{align}
\label{Props.L2Estimates.Eqn2}
\int\limits_\Rset{W}\mu-\Phi^\prime\at{\calA{W}}\calA\mu\dint\phase=
\int\limits_\Rset{W}\mu-\calA\Phi^\prime\at{\calA{W}}\mu\dint\phase
\end{align}
for all test functions $\mu\in\fspaceL^2$.
\end{lemma}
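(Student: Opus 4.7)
My plan is to split every profile around the explicit reference $W_\sh$ and then telescope. Writing $W = W_\sh + \wt{W}$ with $\wt{W}\in\fspaceL^2$, a direct computation gives $\calA W_\sh\at\phase = \sgn\phase$ for $|\phase|\ge 1/2$ and $\calA W_\sh\at\phase = 2\phase$ for $|\phase|<1/2$, so $\calA W_\sh\in\fspaceL^\infty$, while Lemma~\ref{Props.AvOp} supplies $\calA\wt{W}\in\fspaceL^2\cap\fspaceL^\infty$. Consequently $\calA W$ ranges inside a fixed bounded interval and $\Phi^\prime\at{\calA W}$ is a well-defined bounded measurable function.

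For the first inclusion in \eqref{Props.L2Estimates.Eqn1} I telescope
\begin{align*}
W - \Phi^\prime\at{\calA W}
= \wt{W} + \bigl[W_\sh - \Phi^\prime\at{\calA W_\sh}\bigr]
- \bigl[\Phi^\prime\at{\calA W} - \Phi^\prime\at{\calA W_\sh}\bigr].
\end{align*}
The middle bracket vanishes for $|\phase|\ge 1/2$, because $\calA W_\sh\at\phase=\sgn\phase$ there and $\Phi^\prime\at{\pm 1} = \pm 1$ by \cond{N}, and it is bounded on $[-1/2,1/2]$, so it lies in $\fspaceL^2$; the last bracket is controlled pointwise by the Lipschitz constant of $\Phi^\prime$ on the range of $\calA W$ times $|\calA\wt{W}|\in\fspaceL^2$. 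For the second inclusion I use the identity
\begin{align*}
W - \calA\Phi^\prime\at{\calA W}
= \calA\bigl[W - \Phi^\prime\at{\calA W}\bigr]
+ \bigl[\wt{W} - \calA\wt{W}\bigr]
+ \bigl[W_\sh - \calA W_\sh\bigr],
\end{align*}
whose three summands are in $\fspaceL^2$ respectively by the first inclusion combined with the $\fspaceL^2$-boundedness of $\calA$, by Lemma~\ref{Props.AvOp}, and because $W_\sh - \calA W_\sh$ is bounded and compactly supported in $[-1/2,1/2]$.

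The identity \eqref{Props.L2Estimates.Eqn2} is more delicate, because $\Phi^\prime\at{\calA W}\in\fspaceL^\infty\setminus\fspaceL^2$ and $\calA\mu\in\fspaceL^2$, so neither integral appearing in \eqref{Props.L2Estimates.Eqn2} converges absolutely when the two terms are separated; only the combinations do, through cancellation of the constant tails at $\pm\infty$. I handle this by truncation: set $\mu_M := \mu\cdot\mathbf{1}_{\ccinterval{-M}{M}}$ and first establish the identity for each $\mu_M$. Since $\mu_M$ has compact support and lies in $\fspaceL^1\cap\fspaceL^2$, the four separate integrals all converge absolutely, and Fubini (equivalently the easy compactly supported self-adjointness from Lemma~\ref{Props.AvOp}) gives $\int\Phi^\prime\at{\calA W}\calA\mu_M\,\dint\phase = \int\calA\Phi^\prime\at{\calA W}\,\mu_M\,\dint\phase$, from which \eqref{Props.L2Estimates.Eqn2} with $\mu_M$ in place of $\mu$ is immediate. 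To pass to the limit $M\to\infty$ I rewrite the left-hand side in the manifestly $\fspaceL^1$ form
\begin{align*}
\bigl[W - \Phi^\prime\at{\calA W}\bigr]\mu_M
+ \bigl[\Phi^\prime\at{\calA W} - W_\sh\bigr]\bigl[\mu_M - \calA\mu_M\bigr]
+ W_\sh\bigl[\mu_M - \calA\mu_M\bigr],
\end{align*}
noting that $\Phi^\prime\at{\calA W} - W_\sh\in\fspaceL^2$ by exactly the same splitting used for the first inclusion, that $\mu_M\to\mu$ and $\calA\mu_M\to\calA\mu$ strongly in $\fspaceL^2$, and that a further Fubini converts $\int W_\sh\bigl[\mu_M - \calA\mu_M\bigr]\,\dint\phase$ into $\int\bigl[W_\sh - \calA W_\sh\bigr]\mu_M\,\dint\phase$, whose kernel is bounded and compactly supported. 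Cauchy--Schwarz then yields convergence of every piece, and the right-hand side converges by Cauchy--Schwarz together with part~1. The main obstacle is precisely this bookkeeping: the raw identity mixes two pieces that individually fail to be absolutely integrable, so the $\fspaceL^2$ pairings have to be engineered carefully before any limit may legitimately be taken.
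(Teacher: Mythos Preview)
Your argument is correct and follows the same overall strategy as the paper: telescope around $W_\sh$ and use the Lipschitz bound on $\Phi^\prime$ for the $\fspaceL^2$ inclusions, then truncate and invoke the self-adjointness of $\calA$ for the identity. The execution differs in two places. For the second inclusion the paper simply repeats the argument used for the first (noting that $\calA\Phi^\prime\at{\calA W_\sh}=W_\sh$ outside $[-1,1]$ and estimating $\calA\Phi^\prime\at{\calA W}-\calA\Phi^\prime\at{\calA W_\sh}$ by $C\abs{\calA^2\wt{W}}$), whereas you reduce it algebraically to the first via the identity $W-\calA\Phi^\prime\at{\calA W}=\calA[W-\Phi^\prime\at{\calA W}]+[\wt{W}-\calA\wt{W}]+[W_\sh-\calA W_\sh]$; both are equally short. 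For \eqref{Props.L2Estimates.Eqn2} the paper truncates the \emph{other} factor: it restricts the integration domain to $[-M,M]$, moves $\calA$ across using $\int_{-M}^{M}\Phi^\prime\at{\calA W}\calA\mu=\int_\Rset\mu\,\calA\bigl(\chi_{[-M,M]}\Phi^\prime\at{\calA W}\bigr)$, and bounds the boundary error by $C\int_{\pm M-1/2}^{\pm M+1/2}|\mu|$, which tends to zero for $\mu\in\fspaceL^2$. This is somewhat leaner than your three-term splitting of the integrand, since it avoids having to rewrite $W_\sh[\mu_M-\calA\mu_M]$ via a second Fubini; on the other hand your version makes the $\fspaceL^2$ pairing structure of the limiting integrals more explicit. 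Either way the content is the same.
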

\begin{proof}
With
\begin{align*}
\babs{
\Phi^\prime\bat{\at{\calA{W}}\at\phase}-
\Phi^\prime\bat{\at{\calA{W}_\sh}\at\phase}}
\leq%
C\babs{\calA{\wt{W}}\at\phase},
\quad%
C=\sup\Big\{\abs{\Phi^{\prime\prime}\at{w}}\,:\,\abs{w}
\leq\max\{1,\,\norm{\calA{W}}_\infty\}\Big\}
\end{align*}
for all $\phase$, and
$\Phi^\prime\at{\calA{W}_\sh\at\phase}=W_\sh\at\phase$ for
$\abs{\phase}\geq\tfrac{1}{2}$, we find (with the usual cut-off
function $\chi$)
\begin{align*}
\babs{W-\Phi^\prime\at{\calA{W}}}
\leq%
\wt{W}+\chi_{\ccinterval{-\tfrac{1}{2}}{\tfrac{1}{2}}}%
\babs{W_\sh-\Phi^\prime\at{\calA{W}_\sh}}+C\babs{\calA{\wt{W}}}.
\end{align*}
This gives \eqref{Props.L2Estimates.Eqn1}$_1$ since
$\wt{W},\,\calA\wt{W}\in\fspaceL^2$, and the proof of
\eqref{Props.L2Estimates.Eqn1}$_2$ is similar. Now let $M>0$ be
arbitrary. Since $\calA$ is symmetric, see Lemma \ref{Props.AvOp},
we write
\begin{align}
\label{Props.L2Estimates.Eqn3}
\int\limits_{-M}^{M}\Phi^\prime\at{\calA{W}}
\calA\mu\dint\phase
&=%
\int\limits_{\Rset}\mu\calA\at{\chi_{\ccinterval{-M}{M}}
\Phi^\prime\at{\calA{W}}}\dint\phase
=
\int\limits_{-M}^M\mu\calA\Phi^\prime\at{\calA{W}}\dint\phase+E_M,
\end{align}
where the error term satisfies
\begin{align*}
\abs{E_M}\leq{C}
\at{%
\int\limits_{M-\tfrac{1}{2}}^{M+\tfrac{1}{2}}\abs{\mu}\dint\phase
+%
\int\limits_{-M-\tfrac{1}{2}}^{-M+\tfrac{1}{2}}
\abs{\mu}\dint\phase
},\qquad
C=\sup\big\{\abs{\Phi^\prime\at{w}}\,:
\,\abs{w}\leq\norm{\calA{W}}_\infty\big\}.
\end{align*}
Finally, we add $\int_{M}^{M}{W}\mu\dint\phase$ on both sides of
\eqref{Props.L2Estimates.Eqn3}, and taking the limit $M\to\infty$
yields \eqref{Props.L2Estimates.Eqn2}.
\end{proof}
We now prove that $\calL$ is well-defined and
G\^{a}teaux-differentiable on $\calH$.
\begin{remark}
\label{EM.NormPot}%
For each $w_0$ the pointwise normalised potential
\begin{align*}
\Phi_{w_0}\at{w}:=
\Phi\at{w_0+w}-\Phi^\prime\at{w_0}w-\Phi\at{w_0}
 \end{align*}
satisfies $0\leq\Phi_{w_0}\at{w}\leq\tfrac{c}{2}w^2$ with
$c=\sup\Phi^{\prime\prime}|_{\ccinterval{w_0}{w}}$.
\end{remark}
\begin{lemma}
\label{DerivativeOfL} %
The functional $\calL$ is well-defined on $\calH$. Moreover, it is
G\^{a}teaux differentiable with locally Lipschitz continuous
derivative $\partial\calL\ato{W}=W-\calT\ato{W}$.
\end{lemma}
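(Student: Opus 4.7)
The plan is to split the integrand of $\calL$ into manifestly integrable pieces by Taylor-expanding about the reference profile, and then to deduce both the Gâteaux derivative formula and the local Lipschitz estimate from identity \eqref{Props.L2Estimates.Eqn2} together with the operator bounds of Lemma \ref{Props.AvOp}. Writing $\wt W = W_\sh - W \in \fspaceL^2$ and using the normalised potential $\Phi_{w_0}$ from Remark \ref{EM.NormPot}, I would expand
\begin{equation*}
L\pair{W}{\calA W} - L\pair{W_\sh}{\calA W_\sh} = -W_\sh \wt W + \tfrac{1}{2} \wt W^2 + \Phi^\prime\at{\calA W_\sh}\calA \wt W - \Phi_{\calA W_\sh}\at{-\calA \wt W}.
\end{equation*}
The two quadratic terms lie in $\fspaceL^1$: $\wt W^2$ directly, and $\Phi_{\calA W_\sh}\at{-\calA \wt W}$ via the bound $0\le\Phi_{w_0}\at{w}\le \tfrac{c}{2}w^2$ together with $\calA \wt W \in \fspaceL^2\cap \fspaceL^\infty$ from Lemma \ref{Props.AvOp}.

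The linear terms require regrouping, since neither $\int_\Rset W_\sh\wt W\dint\phase$ nor $\int_\Rset W_\sh\calA \wt W\dint\phase$ is absolutely convergent for general $\wt W\in\fspaceL^2$. The normalisation $\Phi^\prime\at{\pm 1} = \pm 1$ combined with $\calA W_\sh = W_\sh$ on $\{\abs{\phase}\ge 1/2\}$ shows that $\Phi^\prime\at{\calA W_\sh}-W_\sh$ and $\calA W_\sh - W_\sh$ are bounded and compactly supported in $\ccinterval{-1/2}{1/2}$. Using the self-adjointness of $\calA$ on $\fspaceL^2$ (justified by cut-off and limiting as in the proof of \eqref{Props.L2Estimates.Eqn2}), I would then rewrite
\begin{equation*}
\calL\at{W} = \int_\Rset\Bato{\tfrac{1}{2}\wt W^2 - \Phi_{\calA W_\sh}\at{-\calA \wt W} + \bat{\Phi^\prime\at{\calA W_\sh}-W_\sh}\calA \wt W + \bat{\calA W_\sh - W_\sh}\wt W}\dint\phase,
\end{equation*}
which coincides with the formal expression \eqref{Def.Lagr} whenever $\wt W$ has compact support; all four summands are in $\fspaceL^1$, so $\calL$ is well-defined on $\calH$.

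For the Gâteaux derivative at $W\in\calH$ in direction $\mu\in \fspaceL^2$, differentiating the above representation at $s=0$ and once more invoking self-adjointness to collect residual terms yields the first-order contribution $\int_\Rset \bato{W\mu - \Phi^\prime\at{\calA W}\calA \mu}\dint\phase$, which by \eqref{Props.L2Estimates.Eqn2} equals $\int_\Rset\at{W-\calT\ato{W}}\mu\dint\phase$; the $s^2$-remainder of the Taylor expansion is pointwise dominated by $s^2\bato{\tfrac{1}{2}\mu^2 + C\at{\calA \mu}^2}$ and hence integrable. The local Lipschitz estimate then follows from
$\partial\calL\ato{W_1}-\partial\calL\ato{W_2} = \at{W_1-W_2} - \calA\bato{\Phi^\prime\at{\calA W_1} - \Phi^\prime\at{\calA W_2}}$,
the $\fspaceL^2$-operator bound $\norm{\calA}\le 1$, and the uniform estimate $\norm{\calA W_i}_\infty\le 1 + \norm{\wt W_i}_2$, which makes $\Phi^\prime$ Lipschitz on the relevant range with constant depending only on $\max_i\norm{\wt W_i}_2$. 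I expect the main obstacle to be the very first step: for general $\wt W\in \fspaceL^2$ the naive linear integrals fail to converge absolutely, and one must use both the $\pm 1$-normalisation of $\Phi^\prime$ at the asymptotic states and the compact support of the averaging kernel in order to replace them by the absolutely convergent representation above.
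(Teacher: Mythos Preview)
Your proof is correct and follows essentially the same route as the paper: Taylor-expand the integrand about $W_\sh$, bound the quadratic pieces via Remark \ref{EM.NormPot} and Lemma \ref{Props.AvOp}, and absorb the problematic linear piece by a cut-off-and-limit self-adjointness argument. The only cosmetic difference is in how the linear term is regrouped: the paper applies \eqref{Props.L2Estimates.Eqn2} with $W=W_\sh$, $\mu=\wt W$ to obtain the single term $\int_\Rset\bat{W_\sh-\calA\Phi^\prime\at{\calA W_\sh}}\wt W\dint\phase$ (whose coefficient is in $\fspaceL^2$ by \eqref{Props.L2Estimates.Eqn1}), whereas you split into the two pieces $\bat{\Phi^\prime\at{\calA W_\sh}-W_\sh}\calA\wt W$ and $\bat{\calA W_\sh-W_\sh}\wt W$ with compactly supported coefficients; both require exactly the same boundary-error estimate you invoke, and the paper is equally terse about the G\^{a}teaux and Lipschitz parts, which you spell out more fully.
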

\begin{proof}
By construction, we have
\begin{align*}
L\pair{W}{\calA{W}}-L\pair{W_\sh}{\calA{W_\sh}}
=%
\tfrac{1}{2}\wt{W}^2-
W_\sh\wt{W}+
\Phi^\prime\at{\calA{W}_\sh}\calA\wt{W}-
\Phi_{\calA{W}_\sh}\nat{\calA\wt{W}}
\end{align*}
with pointwise normalised potentials $\Phi_{\calA{W}_\sh}$ as in
Remark \ref{EM.NormPot}. Using \eqref{Props.L2Estimates.Eqn2} with
$W=W_\sh$ and $\mu=\wt{W}$ we find
\begin{align*}
\int\limits_\Rset{W}_\sh\wt{W}-
\Phi^\prime\at{\calA{W}_\sh}\calA\wt{W}\dint\phase=
\int\limits_\Rset\Bat{{W}_\sh-
\calA\Phi^\prime\at{\calA{W}_\sh}}\wt{W}\dint\phase,
\end{align*}
with well-defined integrals on both sides.
Using
$
C:= \sup\Phi^{\prime\prime}|_{[-1,1]} < \infty
$
we estimate
\begin{align*}
\int\limits_\Rset\tfrac{1}{2}\wt{W}^2\dint\phase<\infty,
\quad%
\int\limits_\Rset\Phi_{\calA{W}_\sh}\nat{\calA\wt{W}}\dint\phase
\leq{}%
C\norm{\calA\wt{W}}_2^2
\leq{}%
C\norm{\wt{W}}_2^2,
\end{align*}
and conclude that $\calL\at{W}$ is well-defined for all $W\in\calH$.
Moreover, \eqref{Props.L2Estimates.Eqn1} implies
$W-\calT\ato{W}\in\fspaceL^2$, and in view of
\eqref{Props.L2Estimates.Eqn2} we readily verify the formula for
$\partial\calL$ as well as the claimed Lipschitz property.
\end{proof}
%
%
\subsection{Monotone and heteroclinic functions }\label{s:cones}
%
%
In order to introduce a refined ansatz space for the front profile
$W$ we start with some preliminary remarks concerning the weak
formulation of monotonicity, asymptotic values, and
$\fspaceL^\infty$ bounds.
\par%
Let $\cointerval{\phase_0}{\infty}$ be some interval,
$U\in\fspaceL^\infty\at{\cointerval{\phase_0}{\infty}}$ be fixed,
and let $\mu\geq0$ denote an arbitrary smooth test function with
compact support in $\oointerval{\phase_0}{\infty}$. Then, the
function $U$ is said to
\begin{enumerate}
\item
be \emph{increasing}, if
\begin{math}
\int_{\Rset}\at{{U}\at{\phase+\bar\phase}-{U}\at{\phase}}
\mu\at{\phase}\dint\phase\geq0
\end{math}
for all shifts $\bar\phase\geq0$ and all $\mu$,
\item
be \emph{nonnegative}, if
\begin{math}
\int_{\Rset}{U}\at{\phase}\mu\at{\phase}\dint\phase\geq0
\end{math}
for all $\mu$,
\item
\emph{take values} in $\ccinterval{u_1}{u_2}$, if
\begin{math}
u_2\geq\int_{\Rset}{U}\at{\phase}\mu\at{\phase}\dint\phase\geq{u_1}
\end{math}
for all $\mu\geq0$ with
$\int_{\Rset}\mu\at{\phase}\dint\phase=1$,
\item
have the \emph{asymptotic value} $u_+$ for $\phase\to\infty$, if
\begin{math}
\int_\Rset{U}\at{\phase+\bar\phase} \mu\at{\phase}\dint\phase
\to{u}_+\int_\Rset\mu\at{\phase}\dint\phase
\end{math}
as $\bar\phase\to\infty$ for all $\mu$.
\end{enumerate}
Note that all these properties are preserved under weak convergence,
and that similar statements hold on $\ocinterval{\infty}{\phase_0}$.
\bigpar
We aim at establishing the existence of fronts in the convex set
\begin{align*}
\calC:=\Big\{%
W\in\calH %
\text{ is increasing with}%
\,W\at{\phase}\to\pm1 \text{ as }\phase\to\pm\infty, %
\Big\}
\end{align*}
where the convergence conditions mean that $W$ has asymptotic values
$-1$ and $+1$ in the above sense.
\bigpar
A particular problem in our subsequent analysis is to control the
relative shift between a profile function $W$ and the reference
profile $W_\sh$. For this reason we introduce the set of all pinned
profile functions by
\begin{align}
\notag
\calC_0:=\left\{W\in\calC
\text{ with}%
\begin{array}{lll}
{W}|_{\cointerval{0}{+\infty}}
\text{ takes values in }%
\ccinterval{0}{+1}
\\%
{W}|_{\ocinterval{-\infty}{0}}
\text{ takes values in }%
\ccinterval{-1}{0}
\end{array}%
\right\}.%
\end{align}
Obviously, for each $W\in\calC$ there exists a phase shift
$\phase_0$ such that ${W}\at{\cdot-\phase_0}\in\calC_0$, and this
phase shift is unique if $W$ is strictly increasing. Moreover, we
have $W\in\calC_0$ if and only if $\wt{W}\in\wt{\calC}_0$ with
\begin{align}
\notag
\wt{\calC}_0:=\left\{\wt{W}\in\fspaceL^2\text{ such that }
\begin{array}{lll}
\wt{W}|_{\cointerval{0}{+\infty}} \text{ is decreasing with values
in }\ccinterval{0}{+1}
\\%
\wt{W}|_{\ocinterval{-\infty}{0}}\text{ is decreasing with values in
}\ccinterval{-1}{0}\end{array}\right\}.
\end{align}
Notice that the set $\wt{\calC}_0$ is convex and closed
under weak convergence in $\fspaceL^2$.
%
%
%
\subsection{Variational approach}
%
%
%
The starting point for our variational existence result is the
invariance of $\calC$ under the action of $\calT$. In fact, it is
straightforward that the averaging operator $\calA$ respects both the
monotonicity and asymptotic values of $W$. In addition, since
$\Phi^\prime$ is a monotone map on $\ccinterval{-1}{1}$ with
$\Phi^\prime\at{\pm1}=\pm1$ the set $\calC$ is also
invariant under the pointwise application of $\Phi^\prime$. Notice,
however, that $\calC_0$ is \emph{not} invariant under the action of
$\calT$ since for (continuous) $W$ with $W\at0=0$ in general we have
$\calT\ato{W}\at0\neq0$.
\bigpar%
Next we consider the gradient flow of $\calL$ in $\calH$, that is
\begin{align}
\label{Gradientflow} %
\tfrac{\dint}{\dint{s}}{W}
=%
-\partial\calL\ato{W}=-W+\calT\ato{W}
\end{align}
with flow time $s$. According to Lemma \ref{DerivativeOfL}, the
initial value problem for this $\calH$-valued ODE is well-defined,
and we readily verify that
\begin{align}
\label{Gradientflow.Diss} %
\tfrac{\dint}{\dint{s}}\calL\at{W}
=%
\skp{\partial\calL\at{W}}{\tfrac{\dint}{\dint{s}}{W}}
=%
-\norm{W-\calT\ato{W}}_2^2.
\end{align}
In particular, each stationary point of \eqref{Gradientflow} is a
fixed point of $\calT$.
\bigpar
The key observation is that $\calC$ is an invariant set for
\eqref{Gradientflow}. To see this we introduce the corresponding
Euler scheme with step size $0<\la<1$, that is
\begin{align}
\label{Gradientflow.Scheme} %
W\mapsto{W}-\lambda\partial\calL\ato{W}
=%
\at{1-\lambda}W+\lambda\calT\ato{W}.
\end{align}
This scheme leaves $\calC$ invariant because $W\in\calC$ implies
$\calT\ato{W}\in\calC$, and  since ${\calC}$ is convex and closed in
$\calH$ the claim follows with $\la\to0$. As a consequence we obtain
the following result.
\par
\begin{lemma}
\label{MinimisersAndFronts}
Each local minimiser of $\calL$ in $\calC$ is a solution
to the front equation $W=\calT\ato{W}$.
\end{lemma}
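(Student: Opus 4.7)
The approach is a standard first-order optimality argument on a convex set, using the three ingredients already assembled: the invariance of $\calC$ under $\calT$, the convexity of $\calC\subset\calH$, and the G\^ateaux differentiability of $\calL$ with $\partial\calL\ato{W}=W-\calT\ato{W}\in\fspaceL^2$ from Lemma~\ref{DerivativeOfL}. The plan is to test the first variation at a local minimiser $W_*$ against the specific admissible direction $\calT\ato{W_*}-W_*=-\partial\calL\ato{W_*}$ and deduce that this direction must vanish in $\fspaceL^2$.

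Let $W_*$ be a local minimiser of $\calL$ in $\calC$. Since $\calT\ato{W_*}\in\calC$ by the invariance established earlier, and since $\calC$ is convex, the entire segment
\begin{align*}
W_\la := \at{1-\la}\,W_* + \la\,\calT\ato{W_*} = W_* - \la\,\partial\calL\ato{W_*}, \qquad \la\in\ccinterval{0}{1},
\end{align*}
lies in $\calC$. Moreover $\calT\ato{W_*}-W_*\in\fspaceL^2$ by \eqref{Props.L2Estimates.Eqn1}, so $W_\la$ differs from $W_*$ by $\la\norm{\calT\ato{W_*}-W_*}_2$ in the $\fspaceL^2$-metric underlying $\calH$, and hence $W_\la\to W_*$ in $\calH$ as $\la\to 0^+$. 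Consequently, for all sufficiently small $\la>0$ the profile $W_\la$ lies in the neighbourhood where local minimality holds, so $\calL\at{W_\la}\geq\calL\at{W_*}$.

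Combining this with the Lipschitz G\^ateaux differentiability of $\calL$ yields
\begin{align*}
0\;\leq\;\lim_{\la\to0^+}\frac{\calL\at{W_\la}-\calL\at{W_*}}{\la}
\;=\;\skp{\partial\calL\ato{W_*}}{\calT\ato{W_*}-W_*}
\;=\;-\norm{W_*-\calT\ato{W_*}}_2^2,
\end{align*}
which forces $W_*=\calT\ato{W_*}$ as an identity in $\fspaceL^2$, and by Remark~\ref{Rem:Regularity} in the classical sense as well. I do not anticipate any real obstacle in carrying this out: every non-trivial ingredient---the invariance of $\calC$ under $\calT$, the admissibility $\calT\ato{W_*}-W_*\in\fspaceL^2$ of the tangent direction, and the identification of $\partial\calL$---has already been verified in the preceding lemmas, so the argument reduces to the convex-set version of the Euler--Lagrange stationarity condition.
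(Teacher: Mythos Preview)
Your argument is correct and is essentially the paper's own approach: the paper observes that the Euler step $W\mapsto(1-\la)W+\la\calT\ato{W}$ preserves $\calC$ by convexity and the invariance $\calT\ato{\calC}\subset\calC$, and then uses the dissipation identity $\tfrac{\dint}{\dint s}\calL\at{W}=-\norm{W-\calT\ato{W}}_2^2$ to conclude that a local minimiser must be a fixed point. Your version simply makes this explicit by computing the one-sided directional derivative along the segment $W_\la$, which is exactly the discrete form of that dissipation inequality.
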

We proceed with two remarks: $(i)$ The front equation can be
considered as the Euler-Lagrange equation for $\calL$ corresponding
to variations $W\rightsquigarrow{W}+\delta\wt{W}$ with arbitrary
$\wt{W}\in\fspaceL^2$. Therefore, the fact that minimisers of
$\calL$ in $\calC$ solve the front equation is not clear a priori
but follows from of the invariance properties of $\calC$ and the
dissipation inequality \eqref{Gradientflow.Diss}. $(ii)$ There is no
analogue to Lemma \ref{MinimisersAndFronts} for local maximisers.
This follows from the next Remark and reflects that $\calC$ is not
invariant under the negative gradient flow of $\calL$.
\begin{remark}
\label{Remark.LocMax}
The shock profile $W_\sh$ is a local maximiser for $\calL$ in
$\calC$ but does not satisfy the front equation.
\end{remark}
\begin{proof}
Let $W\in\calC$ be arbitrary with $W\neq{W}_\sh$. For
$0\leq\eps\leq1$ we define $W_\eps=\at{1-\eps}W_\sh+\eps{W}$ and
note that $W_\eps\in\calC$. Lemma \ref{DerivativeOfL} yields
\begin{align*}
\la:=\frac{\dint}{\dint\eps}\calL\at{W_\eps}|_{\eps=0}
=%
\skp{\partial{\calL}\ato{W_\sh}}{W-W_\sh}=-\int\limits_{\Rset}
\bat{W_\sh-\calT\ato{W_\sh}}\bat{W_\sh-W}\dint\phase.
\end{align*}
On the one hand, we have $\calT\ato{W_\sh}\in\calC$ with
$W_\sh\at\phase<\calT\ato{W_\sh}\at\phase$ for $-1<\phase<0$,
$W_\sh\at\phase>\calT\ato{W_\sh}\at\phase$ for $0<\phase<1$, and
$W_\sh\at\phase=\calT\ato{W_\sh}\at\phase$ for $\abs{\phase}\geq1$.
On the other hand, $W\in\calC$ implies
$W_\sh\at\phase\leq{W}\at\phase$ and
$W_\sh\at\phase\geq{W}\at\phase$ for $\phase<0$ and $\phase>0$,
respectively. From this we conclude $\la\leq0$ and $W\neq{W}_\sh$
gives $\la<0$. Finally, $W_\sh$ does not equal ${\calT}\ato{W_\sh}$
as the latter is continuous.
\end{proof}
Our strategy for proving the existence of fronts is to show that
$\calL$ attains its minimum in $\calC$. To this end we follow the
direct approach, and prove that minimising sequences are precompact.
%
%
\subsection{Bounds for $\calL$ }
%
%
Towards compactness results for minimising sequences for $\calL$ in
${\calC}$, we first prove that corresponding sequences in $\calC_0$
also minimise $\calL$. To this end we start with a technical
result.
\begin{lemma}
\label{Props.AuxLemma} %
Let $U\in\fspaceL^\infty$ be monotone with asymptotic values $u_-$
and $u_+$ for $\phase\to-\infty$ and $\phase\to\infty$,
respectively. Then we have
\begin{align}
\label{Props.AuxLemma.Eqn} %
\int\limits_{\Rset}%
{U\at{\phase+\bar\phase}}-{U\at{\phase}}\dint\phase
=%
\at{u_+-u_-}\bar\phase.
\end{align}
for any phase shift $\bar\phase$.
\end{lemma}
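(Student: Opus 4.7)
My plan is to reduce to the case of an increasing $U$ and $\bar\phase \geq 0$ (the other cases follow by symmetry and sign change), and then to compute the integral directly via a change of variables on truncated intervals. Specifically, a simple substitution yields
\begin{align*}
\int\limits_{-R}^{R} \bat{U\at{\phase+\bar\phase} - U\at{\phase}}\dint\phase
= \int\limits_{R}^{R+\bar\phase} U\at{\psi}\dint\psi - \int\limits_{-R}^{-R+\bar\phase} U\at{\psi}\dint\psi
\end{align*}
for every $R>0$, so everything reduces to controlling $U$ near $\pm\infty$ and sending $R\to\infty$.

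The main subtlety is that the asymptotic values $u_\pm$ are defined only in the weak sense of \S\ref{s:cones}. I would first argue that for monotone bounded $U$ the pointwise limits $\lim_{\phase\to\pm\infty}U\at{\phase}$ exist (by monotonicity and boundedness) and coincide with the weak asymptotic values $u_\pm$: testing the weak definition against smooth bumps supported on arbitrarily far intervals and comparing with the monotone pointwise limit forces equality. From this it follows at once that
\begin{align*}
\int\limits_{R}^{R+\bar\phase} U\at{\psi}\dint\psi \xrightarrow{R\to\infty} u_+\bar\phase,\qquad
\int\limits_{-R}^{-R+\bar\phase} U\at{\psi}\dint\psi \xrightarrow{R\to\infty} u_-\bar\phase.
\end{align*}

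To justify passing to the limit on the left-hand side, observe that monotonicity makes the integrand $U\at{\phase+\bar\phase}-U\at{\phase}$ nonnegative (for $\bar\phase\geq 0$), so the monotone convergence theorem yields
\begin{align*}
\int\limits_{\Rset}\bat{U\at{\phase+\bar\phase}-U\at{\phase}}\dint\phase
= \lim\limits_{R\to\infty}\int\limits_{-R}^{R}\bat{U\at{\phase+\bar\phase}-U\at{\phase}}\dint\phase
= \at{u_+-u_-}\bar\phase,
\end{align*}
which is \eqref{Props.AuxLemma.Eqn}. The negative shift case $\bar\phase<0$ follows either by applying the established identity to $-\bar\phase$ after a substitution $\phase\mapsto\phase-\bar\phase$, or by noting that the integrand changes sign but both sides of \eqref{Props.AuxLemma.Eqn} are odd in $\bar\phase$. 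I expect no serious obstacle: the one point that requires actual care is the identification of the weak asymptotic values with pointwise limits, but this is immediate for bounded monotone functions.
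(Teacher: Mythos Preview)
Your proposal is correct and follows essentially the same route as the paper: truncate to $\ccinterval{-R}{R}$, rewrite the integral as a difference of boundary integrals near $\pm\infty$ via substitution, and pass to the limit using the Monotone Convergence Theorem justified by the sign of the integrand. The only difference is cosmetic---the paper combines the two boundary integrals into a single integral $\int_0^{\bar\phase}\bat{U\at{M+\phase}-U\at{\phase-M}}\dint\phase$ before taking the limit---and your extra remark identifying the weak asymptotic values with pointwise limits for bounded monotone $U$ is a valid clarification that the paper leaves implicit.
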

\begin{proof}
For $M>0$ we compute
\begin{align*}
\int\limits_{\Rset}\chi_{\ccinterval{-M}{M}}\at\phase
\Bat{{U\at{\phase+\bar\phase}}-
{U\at{\phase}}}\dint\phase
&=%
\int\limits_{-M+\bar\phase}^{M+\bar\phase}
{U\at{\phase}}\dint\phase
-\int\limits_{-M}^{M}{U\at{\phase}}\dint\phase
=%
\int\limits_{M}^{M+\bar\phase}
{U\at{\phase}}\dint\phase
-\int\limits_{-M}^{-M+\bar\phase}
{U\at{\phase}}\dint\phase
\\&= %
\int\limits_{0}^{\bar\phase}{U\at{M+\phase}}-
{U\at{\phase-M}}\dint\phase.
\end{align*}
Since the integrand on the left hand side has a sign and converges
pointwise, we can pass to the limit $M\to\infty$ by means of the
Monotone Convergence Theorem, and this gives the desired result.
\end{proof}
Notice that the proof of Lemma \ref{Props.AuxLemma} is very close to
that of Lemma \ref{Lemma:JumpCond}, and that the assumptions on $U$
can easily be weakened. For instance, \eqref{Props.AuxLemma.Eqn}
holds also for all functions $U\in\fspace{BV}$ as these can be
written as the difference of two monotone functions with well
defined asymptotic values.
\bigpar
Lemma \ref{Props.AuxLemma} implies that $\calL$ is
invariant under phase shifts, and hence
$\inf\calL|_\calC=\inf\calL|_{\calC_0}$.
\begin{corollary}
\label{Props.InvShifts}%
For all $W\in\calC$ and all phase shifts $\bar\phase$ we have
$\calL\at{W\at{\cdot+\bar\phase}}=\calL\at{W}$.

\end{corollary}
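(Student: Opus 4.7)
The plan is to reduce the invariance to Lemma \ref{Props.AuxLemma} (in the BV extension mentioned just after that lemma), using the energy conservation condition \cond{E} to make the shift contribution vanish. The key observation is that the Lagrangian density $L\pair{W}{\calA W}$ has equal limits at $\pm\infty$ precisely under \cond{E}.

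Concretely, I would denote $W_{\bar\phase}\at\phase:=W\at{\phase+\bar\phase}$ and observe $\calA W_{\bar\phase}\at\phase=\at{\calA W}\at{\phase+\bar\phase}$, so that $W_{\bar\phase}\in\calC$ and $\calL\at{W_{\bar\phase}}$ is well-defined by Lemma \ref{DerivativeOfL}. Setting $U\at\phase:=L\pair{W\at\phase}{\calA W\at\phase}$, the integrands of $\calL\at{W_{\bar\phase}}$ and $\calL\at{W}$ are both in $\fspaceL^1$, and their difference cancels the common reference term $L\pair{W_\sh}{\calA W_\sh}$ pointwise, yielding
\begin{align*}
\calL\at{W_{\bar\phase}}-\calL\at{W}
=
\int\limits_\Rset\bat{U\at{\phase+\bar\phase}-U\at\phase}\dint\phase.
\end{align*}
Since $W$ is monotone with values in $\ccinterval{-1}{+1}$ and $\calA W$ is monotone with asymptotic values $\pm1$ (both properties respected by $\calA$), the function $U=\tfrac12W^2-\Phi\at{\calA W}$ is of bounded variation with well-defined pointwise limits $U\at{\pm\infty}=\tfrac12-\Phi\at{\pm1}$. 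These limits coincide precisely by \cond{E}, so the BV version of Lemma \ref{Props.AuxLemma} gives that the right hand side equals $\bigl(U\at{+\infty}-U\at{-\infty}\bigr)\bar\phase=0$.

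The main conceptual point is that \cond{E} is exactly what makes the Lagrangian shift-invariant; the only technical issue is justifying the BV hypothesis of Lemma \ref{Props.AuxLemma} for $U$, which follows by writing $U$ as a smooth function of two bounded monotone functions with limits and decomposing into positive and negative variations, as covered by the remark immediately after that lemma.
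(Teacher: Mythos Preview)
Your proof is correct and follows essentially the same route as the paper: both reduce the shift invariance to Lemma~\ref{Props.AuxLemma} and use condition \cond{E} to make the asymptotic contribution vanish. The only cosmetic difference is that the paper writes out an explicit monotone decomposition $U=U_1+U_2+U_3+U_4$ (via running infima of $\tfrac12W^2$ and $-\Phi\at{\calA W}$), whereas you invoke the BV extension of Lemma~\ref{Props.AuxLemma} directly; the paper's decomposition is precisely such a BV splitting, so the two arguments coincide.
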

\begin{proof}
We define the monotone functions
\begin{align*}
U_1\at\phase:=%
\inf\limits_{\tilde{\phase}
\geq%
\phase}\tfrac{1}{2}W^2\at{\tilde\phase},\quad
U_2\at\phase:=%
-\inf\limits_{\tilde{\phase}
\geq%
\phase}\Phi\bat{\at{\calA{W}}\at{\tilde\phase}},
\quad{U}_3:=%
\tfrac{1}{2}W^2-U_1,\quad{U}_4:=-\Phi\at{\calA{W}}-U_2,
\end{align*}
and find
\begin{align}
\label{Props.InvShifts.Proof1}%
\calL\at{W\at{\cdot+\bar\phase}}-\calL\at{W}=\sum\limits_{i=1}^4
\int\limits_{\Rset}%
{U_i\at{\phase+\bar\phase}}-{U_i\at{\phase}}\dint\phase
=\bar{\phase}\bat{\Phi\at{-1}-\Phi\at{1}}=0
\end{align}
thanks to Lemma \ref{Props.AuxLemma}.
\end{proof}
Note that the energy condition \cond{E} is essential for Corollary
\ref{Props.InvShifts}. In fact, if it is violated, then \eqref
{Props.InvShifts.Proof1} implies that $\calL$ is unbounded from
below and above.
\bigpar
Next we derive explicit bounds for $\calL\at{W}$ in terms of
$\norm{\wt{W}}_2$ by comparing with the functional
\begin{align}
\label{DefL2}
{\calL_\#}\at{W}:=\int\limits_\Rset
L\pair{W}{{W}}-L\pair{W_\sh}{{W_\sh}}\dint\phase
=\int\limits_{\Rset}g_\Phi\at{W\at\phase}\dint\phase,
\end{align}
where the last identity holds by definition of $g_\Phi$, see
\eqref{Eqn:DefG}, and \cond{E}. In particular, \cond{A} implies
${\calL_\#}\at{W} \geq 0$.
\begin{lemma}
\label{Props.LBounds} %
There exists a constant $c\leq 4$ such that
\begin{align}
\label{Props.LBounds.Eqn1} %
|{\calL_\#}\at{W}-\calL\at{W}|\leq c
\end{align}
for all $W\in\calC$.
\end{lemma}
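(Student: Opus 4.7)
The plan is to write $\calL_\#(W)-\calL(W)$ as a single integral and bound it pointwise, then integrate. Since $L(v,r)=\tfrac{1}{2}v^2-\Phi(r)$, the kinetic parts $\tfrac{1}{2}W^2-\tfrac{1}{2}W_\sh^2$ cancel in the difference and we are left with
$$\calL_\#(W) - \calL(W) = \int_\Rset \bigl(\Phi(\calA W) - \Phi(W)\bigr) d\phase - \int_\Rset \bigl(\Phi(\calA W_\sh) - \Phi(W_\sh)\bigr) d\phase,$$
provided each integrand is separately in $\fspaceL^1$. I will bound the two integrals individually and then apply the triangle inequality.

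The key pointwise inequality is $|\Phi(\calA W(\phase)) - \Phi(W(\phase))|\leq |\calA W(\phase) - W(\phase)|$. This follows from the mean value theorem, using that convexity \cond{C} together with the normalisation \cond{N} forces $\Phi'$ to be nondecreasing on $\ccinterval{-1}{1}$ with $\Phi'(\pm 1)=\pm 1$, hence $|\Phi'|\leq 1$ on $\ccinterval{-1}{1}$; and that for any $W\in\calC$, both $W$ and $\calA W$ take values in $\ccinterval{-1}{1}$, since $W$ is monotone with asymptotic values $\pm 1$.

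Monotonicity of $W$ yields $W(\phase-\tfrac{1}{2})\leq W(\phase),\calA W(\phase)\leq W(\phase+\tfrac{1}{2})$, and therefore
$$|\calA W(\phase) - W(\phase)| \leq W(\phase+\tfrac{1}{2}) - W(\phase-\tfrac{1}{2}).$$
Integrating and invoking Lemma \ref{Props.AuxLemma} with shift $\bar\phase=1$ gives $\int_\Rset|\calA W-W|\,d\phase\leq 2$, so the first integral has absolute value at most $2$. For the reference profile, an explicit calculation using $\calA W_\sh(\phase)=2\phase$ on $\ccinterval{-1/2}{1/2}$ shows the second integral has absolute value at most $1/2$. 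Summing yields $|\calL_\#(W)-\calL(W)|\leq 5/2\leq 4$.

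I do not expect a genuine obstacle; the only subtlety is confirming that each of the two integrals in the splitting is individually finite, so that the decomposition is legitimate. The pointwise estimate combined with Lemma \ref{Props.AuxLemma} delivers exactly this integrability at the same time as the quantitative bound, so one calculation does both jobs.
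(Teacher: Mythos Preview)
Your proof is correct and follows essentially the same approach as the paper: both use the Lipschitz bound $|\Phi'|\leq 1$ on $\ccinterval{-1}{1}$, the monotonicity sandwich $|\calA W - W|\leq W(\cdot+\tfrac{1}{2})-W(\cdot-\tfrac{1}{2})$, and Lemma~\ref{Props.AuxLemma} to bound $\int_\Rset|\Phi(\calA W)-\Phi(W)|\,d\phase$ by $2$. The only difference is that the paper applies the same generic estimate to $W_\sh$ (giving the constant $4$), whereas you compute the $W_\sh$-contribution explicitly to obtain the sharper constant $5/2$.
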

\begin{proof}
The monotonicity of $W\in\calC$ implies
\begin{align}
\notag
W\at{\phase-\tfrac{1}{2}}
\leq%
{W}\at\phase\leq{W}\at{\phase+\tfrac{1}{2}}
,\quad%
W\at{\phase-\tfrac{1}{2}}
\leq%
\at{\calA{W}}\at\phase\leq{W}\at{\phase+\tfrac{1}{2}}
\end{align}
for all $\phase\in\Rset$, and due to
$\abs{\Phi^{\prime}\at{w}}\leq1$ for all $w\in{\ccinterval{-1}{1}}$
we also have
\begin{align*}
\abs{\Phi\bat{\at{\calA{W}}\at\phase}-\Phi\bat{W\at\phase}}
\leq%
\babs{\at{\calA{W}}\at\phase-W\at\phase}.
\end{align*}
In combination we obtain
\begin{align*}
\abs{\Phi\bat{\at{\calA{W}}\at\phase}-\Phi\bat{W\at\phase}}
\leq{W}\at{\phase+\tfrac{1}{2}}-W\at{\phase-\tfrac{1}{2}}%
\end{align*}
and Lemma \ref{Props.AuxLemma} yields
\begin{align}
\label{Props.LBounds.Est3}%
\int\limits_{\Rset}%
\abs{\Phi\bat{\at{\calA{W}}\at\phase}-
\Phi\bat{W\at\phase}}\dint\phase%
\leq2.
\end{align}
Therefore,
\begin{align*}
\abs{\calL\at{W}-{\calL_\#}\at{W}}
\leq%
\int\limits_{\Rset}%
\abs{\Phi\bat{\at{\calA{W}}\at\phase}-
\Phi\bat{W\at\phase}}\dint\phase
+%
\int\limits_{\Rset}%
\abs{\Phi\bat{\at{\calA{W_\sh}}\at\phase}-\Phi\bat{W_\sh\at\phase}}
\dint\phase\leq{4},
\end{align*}
where we used \eqref{Props.LBounds.Est3} both for $W$ and $W_\sh$.
\end{proof}
The following  lemma is key to our approach and strongly relies on
the signed are condition \cond{A} and the monotonicity of all
profiles $W\in\calC$.
\begin{lemma}
\label{Props.Lemma.Bounds}%
There exist two constants $\ul{c}>0$ and $\ol{c}>0$ such that
\begin{align}
\label{Props.Lemma.Bounds.Eqn1}%
\ul{c}\norm{\wt{W}}^2_2
\leq{\calL_\#}\at{W}\leq\ol{c}\norm{\wt{W}}^2_2
\end{align}
for all $W\in\calC_0$ and $\wt{W}=W_\sh-W$.
\end{lemma}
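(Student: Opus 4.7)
The plan is to compare the integrand $g_\Phi(W(\phase))$ in \eqref{DefL2} pointwise with $\wt{W}(\phase)^2$ and then integrate over $\Rset$. The first step is to collect the local behaviour of $g_\Phi$ on $\ccinterval{-1}{1}$: differentiating \eqref{Eqn:DefG} gives $g_\Phi^\prime(w)=w-\Phi^\prime(w)$ and $g_\Phi^{\prime\prime}(w)=1-\Phi^{\prime\prime}(w)$, so conditions \cond{E}, \cond{N} and \cond{S} yield $g_\Phi(\pm1)=0$, $g_\Phi^\prime(\pm1)=0$ and $g_\Phi^{\prime\prime}(\pm1)>0$. Thus $g_\Phi$ has a non-degenerate double zero at each endpoint, and a Taylor expansion shows that $g_\Phi(w)/(1-w)^2\to\tfrac12g_\Phi^{\prime\prime}(1)>0$ as $w\to1^-$ and $g_\Phi(w)/(1+w)^2\to\tfrac12g_\Phi^{\prime\prime}(-1)>0$ as $w\to-1^+$.

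The second step combines these limits with the regularity \cond{R} and the positivity \cond{A} to deduce that $g_\Phi(w)/(1-w)^2$ extends to a continuous, strictly positive function on the compact interval $\ccinterval{0}{1}$, and likewise $g_\Phi(w)/(1+w)^2$ on $\ccinterval{-1}{0}$. Extracting their minima and maxima produces constants $0<\ul c\leq\ol c<\infty$ with
\begin{align*}
\ul{c}\,(1-w)^2 \;\leq\; g_\Phi(w) \;\leq\; \ol{c}\,(1-w)^2 \quad\text{for all }w\in\ccinterval{0}{1},\\
\ul{c}\,(1+w)^2 \;\leq\; g_\Phi(w) \;\leq\; \ol{c}\,(1+w)^2 \quad\text{for all }w\in\ccinterval{-1}{0}.
\end{align*}

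The third step activates the pinning built into $\calC_0$. For $\phase\geq0$ one has $W(\phase)\in\ccinterval{0}{1}$ and $W_\sh(\phase)=1$, so $\wt{W}(\phase)=1-W(\phase)$ and $\wt{W}(\phase)^2=(1-W(\phase))^2$; for $\phase\leq0$ one has $W(\phase)\in\ccinterval{-1}{0}$ and $W_\sh(\phase)=-1$, so $\wt{W}(\phase)^2=(1+W(\phase))^2$. In both cases the pointwise bounds above turn into $\ul{c}\,\wt{W}(\phase)^2\leq g_\Phi(W(\phase))\leq\ol{c}\,\wt{W}(\phase)^2$. Integrating over $\Rset$ and using the representation of $\calL_\#(W)$ in \eqref{DefL2} yields \eqref{Props.Lemma.Bounds.Eqn1}.

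The argument is essentially elementary once the framework is in place, and the actual work is done by the interplay of \cond{A} and \cond{S}: positivity of $g_\Phi$ on the open interval rules out degeneracies in the bulk, while non-degeneracy of the double zeros at $\pm1$ makes $g_\Phi$ genuinely quadratic at the endpoints. Together they say precisely that $g_\Phi(w)$ is comparable to the squared distance from $w$ to the nearer of $\pm1$; the pinned, monotone structure of $\calC_0$ then ensures that $\wt{W}(\phase)$ realises this one-sided distance at every $\phase$, and no subtler compactness or rearrangement argument is required.
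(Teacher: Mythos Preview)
Your proof is correct and follows essentially the same approach as the paper: establish that $g_\Phi$ has nondegenerate double zeros at $\pm1$ via \cond{N}, \cond{E}, \cond{S}, combine this with the interior positivity \cond{A} to get two-sided quadratic bounds $\ul{c}(1\mp w)^2\leq g_\Phi(w)\leq\ol{c}(1\mp w)^2$ on the two halves of $\ccinterval{-1}{1}$, and then use the pinning in $\calC_0$ to identify $(1\mp W(\phase))^2$ with $\wt{W}(\phase)^2$. The only difference is that you spell out the compactness argument for the existence of $\ul{c},\ol{c}$ via continuous extension of the ratios $g_\Phi(w)/(1\mp w)^2$, whereas the paper simply asserts that such constants can be chosen.
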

\begin{proof}
By Assumption \ref{MainAss}, the function $g_\Phi$ is smooth and
positive in $\oointerval{-1}{1}$ with
\begin{align*}
g_\Phi\at{\pm1}=0
,\qquad%
g_\Phi^\prime\at{\pm1}=0
,\qquad%
g_\Phi^{\prime\prime}\at{\pm1}=1-\Phi^{\prime\prime}\at{\pm1},
\end{align*}
and hence we can choose $\ul{c}$ and $\ol{c}$ such that
\begin{align*}
\begin{array}{ccll}
\ul{c}\at{-1-w}^2\leq{g_\Phi\at{w}}\leq\ol{c}\at{-1-w}^2
&\quad&%
\text{for all}&-1\leq{w}\leq0
,\\%
\ul{c}\at{+1-w}^2\leq{g_\Phi\at{w}}\leq\ol{c}\at{+1-w}^2
&\quad&%
\text{for all}&0\leq{w}\leq+1.%
\end{array}
\end{align*}
This implies
$\ul{c}\nat{\wt{W}\at\phase}^2\leq{g_\Phi\at{W\at\phase}}\leq$
$\ol{c}\nat{\wt{W}\at\phase}^2$ for all $\phase\in\Rset$, and
\eqref{Props.Lemma.Bounds.Eqn1} follows thanks to \eqref{DefL2}.
\end{proof}
Notice that the particular values of $\ul{c}$ and $\ol{c}$ depend on
the choice of the pinning point (zero in $\calC_0$), and that there is \emph{no} similar
result for $W\in\calC$. In fact, for any $W\in\calC$ we have
\begin{math}
\calL_\#\at{W\at{\cdot+\bar\phase}}-
\calL_\#\at{W}
=%
\calL\at{W\at{\cdot+\bar\phase}}-\calL\at{W}=0
\end{math}
but $\norm{\wt{W}\at{\cdot+\bar\phase}}_2-\norm{\wt{W}}_2\to\infty$
as $\bar\phase\to\infty$.
\par
Combining  Lemma \ref{Props.LBounds} with Lemma
\ref{Props.Lemma.Bounds} we finally find the desired bounds for
$\calL$.
\begin{corollary}
\label{Props.Corr.Bounds}%
The sub-level sets of $\calL$ in $\calC_0$ are bounded in the
following sense. For each $c_1$ there exists $c_2>0$ such that
$\calL\at{W}\leq{c_1}$ implies $\norm{\wt{W}}_2\leq{c_2}$.
\end{corollary}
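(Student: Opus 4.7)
The plan is to chain the two preceding lemmas directly. Lemma \ref{Props.LBounds} provides the bound $|\calL_\#(W)-\calL(W)|\leq 4$ uniformly for $W\in\calC$, and in particular for $W\in\calC_0\subset\calC$. Lemma \ref{Props.Lemma.Bounds} gives the quadratic lower bound $\underline c\,\|\wt W\|_2^2\leq \calL_\#(W)$, which is available precisely on the pinned class $\calC_0$.

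Concretely, I would fix $c_1$ and any $W\in\calC_0$ with $\calL(W)\leq c_1$. Applying Lemma \ref{Props.LBounds} yields
\begin{align*}
\calL_\#(W)\;\leq\;\calL(W)+|\calL_\#(W)-\calL(W)|\;\leq\; c_1+4,
\end{align*}
and inserting this into the lower bound of Lemma \ref{Props.Lemma.Bounds} gives
\begin{align*}
\underline c\,\|\wt W\|_2^{\,2}\;\leq\;\calL_\#(W)\;\leq\; c_1+4.
\end{align*}
The required constant is then $c_2:=\sqrt{(c_1+4)/\underline c}$.

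The only point that deserves emphasis, rather than being a genuine obstacle, is that the restriction to $\calC_0$ is essential: as the authors already noted, $\calL_\#$ (and $\calL$) is invariant under phase shifts while $\|\wt W\|_2$ is not, so the quadratic lower bound of Lemma \ref{Props.Lemma.Bounds} cannot survive on the whole class $\calC$. Thus the corollary is really a statement about the pinned class, and its scope reflects exactly the role played by the shift-invariance established in Corollary \ref{Props.InvShifts}, which is what allowed the reduction from $\calC$ to $\calC_0$ in the first place.
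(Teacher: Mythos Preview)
Your proof is correct and matches the paper's approach exactly: the paper simply states that the corollary follows by ``combining Lemma \ref{Props.LBounds} with Lemma \ref{Props.Lemma.Bounds}'', and your two displayed inequalities carry out precisely that combination. One cosmetic point: your explicit formula $c_2=\sqrt{(c_1+4)/\ul c}$ is only well-defined for $c_1\geq -4$, but for $c_1<-4$ the sublevel set is empty and the statement is vacuous, so any $c_2>0$ works.
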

%
%
%
%
\subsection{Existence of minimisers}
%
Now we can complete the existence proof for monotone supersonic
fronts. To this end we study weakly convergent minimising sequences
for $\calL$, and exploit the following observations: The limit
$\bar{\gamma}$ of the kinetic energies of the minimising sequence
and the kinetic energy $\gamma_\infty$ of the weak limit satisfy
$\triangle\gamma=\bar{\gamma}-\gamma_\infty\geq 0$.  Consequently,
the weak limit is a minimiser of $\calL$ if and only if the limiting
potential energy difference is less than $\triangle\gamma$. To prove
this, we first observe that the potential energy converges strongly
on compact sets so that we only have to control the potential energy
in the tails of the profile. Second, we show that the loss of
potential energy in the tails is strictly less than
$\triangle\gamma$ provided the shock is supersonic.
\begin{theorem}
\label{EM.ExistenceTheo}  %
$\calL$ attains its minimum in $\calC$. More precisely, $\calL$ is
weakly lower semi-continuous on $\wt{\calC}_0$, and each minimising
sequence has a strongly convergent subsequence.
\end{theorem}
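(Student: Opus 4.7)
The proof follows the direct method. Given a minimising sequence $\nat{W_n}\subset\calC$, the shift invariance from Corollary~\ref{Props.InvShifts} lets me replace it with a sequence in $\calC_0$, and Corollary~\ref{Props.Corr.Bounds} then yields a uniform $\fspaceL^2$-bound on $\wt{W}_n=W_\sh-W_n$. By Banach--Alaoglu a subsequence satisfies $\wt{W}_n\rightharpoonup\wt{W}_\infty$ weakly in $\fspaceL^2$; since $\wt{\calC}_0$ is convex and weakly closed, the limit lies in $\wt{\calC}_0$ and so $W_\infty\in\calC_0$. The monotonicity of $\wt{W}_n$ on each half-line together with the uniform range $\ccinterval{-1}{1}$ lets me apply Helly's selection theorem to pass to a further subsequence with pointwise a.e.\ convergence $\wt{W}_n\to\wt{W}_\infty$; Lemma~\ref{Props.AvOp}(4) then delivers pointwise and strong $\fspaceL^2_\loc$-convergence of $\calA\wt{W}_n$ to $\calA\wt{W}_\infty$.

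To prove weak lower semi-continuity of $\calL$ on $\wt{\calC}_0$, I use the representation from the proof of Lemma~\ref{DerivativeOfL}, combined with \eqref{Props.L2Estimates.Eqn2} applied to $W_\sh$, to write
\begin{align*}
\calL\at{W}
=
\int\limits_\Rset\tfrac{1}{2}\wt{W}^2\dint\phase
-
\int\limits_\Rset F\,\wt{W}\dint\phase
-
\int\limits_\Rset\Phi_{\calA{W}_\sh}\at{\calA\wt{W}}\dint\phase,
\qquad
F:=W_\sh-\calA\Phi^\prime\at{\calA{W}_\sh}.
\end{align*}
The quadratic first integral is weakly lower semi-continuous by convexity. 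Because $\calA{W_\sh}=\sgn\phase$ and $\Phi^\prime\at{\pm 1}=\pm 1$ for $\abs\phase>1/2$, the function $F$ is supported in $\ccinterval{-3/2}{3/2}$ and lies in $\fspaceL^2$, so the middle integral is weakly continuous. By Remark~\ref{EM.NormPot} the last integrand is nonnegative, and on any bounded interval dominated convergence (using the local strong convergence and uniform boundedness of $\calA\wt{W}_n$) delivers
\begin{align*}
\int\limits_{\abs\phase\leq M}\Phi_{\calA{W}_\sh}\at{\calA\wt{W}_n}\dint\phase
\;\longrightarrow\;
\int\limits_{\abs\phase\leq M}\Phi_{\calA{W}_\sh}\at{\calA\wt{W}_\infty}\dint\phase.
\end{align*}

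The main obstacle, and the place where the supersonic condition \cond{S} is used in a sharp way, is controlling the tails of $P\at{W}:=\int\Phi_{\calA{W}_\sh}\at{\calA\wt{W}}\dint\phase$. After passing to a further subsequence, set $\gamma_n:=\tfrac{1}{2}\norm{\wt{W}_n}_2^2\to\bar\gamma$ and $\gamma_\infty:=\tfrac{1}{2}\norm{\wt{W}_\infty}_2^2$; weak lower semi-continuity of the norm yields $\Delta\gamma:=\bar\gamma-\gamma_\infty\geq 0$. The target estimate is $\lim\bat{P\at{W_n}-P\at{W_\infty}}\leq\at{1-\delta}\Delta\gamma$ for some $\delta>0$, which together with the previous paragraph implies $\liminf\bat{\calL\at{W_n}-\calL\at{W_\infty}}\geq\delta\Delta\gamma\geq 0$. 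For $\abs\phase>M$ with $M$ large, Taylor expansion gives $\Phi_{\pm 1}\at{x}=\tfrac{1}{2}\Phi^{\prime\prime}\at\eta\,x^2$ with $\eta$ between $\pm 1$ and $\pm 1+x$. Monotonicity of $\wt{W}_n$ and the uniform $\fspaceL^2$-bound force $\abs{\wt{W}_n\at\phase}\leq C/\sqrt{\abs\phase}$, hence $\calA\wt{W}_n\to 0$ uniformly in $n$ as $\abs\phase\to\infty$; so $\eta$ lies arbitrarily close to $\pm 1$ uniformly in $n$, and \cond{S} supplies $\Phi^{\prime\prime}\at\eta\leq 1-\delta$ on the tails for $M$ large enough. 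The pointwise bound $\abs{\calA\wt{W}_n\at\phase}\leq\wt{W}_n\at{\phase-\tfrac{1}{2}}$ for $\phase>1/2$ (again from monotonicity) then dominates the tail of $P\at{W_n}$ by $\at{1-\delta}\tfrac{1}{2}\int_{\abs\phase>M-1/2}\wt{W}_n^2\dint\phase$; strong local $\fspaceL^2$-convergence forces this tail mass to converge, as $n\to\infty$ and then $M\to\infty$, to $\at{1-\delta}\Delta\gamma$, while the tail of $P\at{W_\infty}$ stays nonnegative.

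Assembling these pieces shows that $\calL$ is weakly lower semi-continuous on $\wt{\calC}_0$, so the weak limit $W_\infty$ realises the infimum and the minimum is attained in $\calC$. For a minimising sequence the strict estimate $\liminf\calL\at{W_n}\geq\calL\at{W_\infty}+\delta\Delta\gamma$ forces $\Delta\gamma=0$, i.e.\ $\norm{\wt{W}_n}_2\to\norm{\wt{W}_\infty}_2$; combined with weak convergence this upgrades to strong $\fspaceL^2$-convergence. The area condition \cond{A} enters only indirectly, through Lemma~\ref{Props.Lemma.Bounds}, which provides the a priori $\fspaceL^2$-bound underlying the whole compactness argument.
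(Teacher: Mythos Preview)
Your proof is correct and follows the same overall strategy as the paper: reduce to $\calC_0$ via shift invariance, extract a weakly convergent subsequence using the $\fspaceL^2$-bound, and exploit the supersonic condition \cond{S} to show that the loss of potential energy in the tails is dominated by a factor strictly less than one times the kinetic-energy defect $\Delta\gamma$, which yields weak lower semi-continuity and then strong convergence.

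The decomposition, however, is genuinely different. The paper expands $\calL\at{W_n}-\calL\at{W_\infty}$ around the \emph{weak limit} $W_\infty$, writing it as $I_{1,n}+I_{2,n}-I_{3,n}$ with $U_n:=W_n-W_\infty$; then $\tfrac{1}{2}\norm{U_n}_2^2\to\Delta\gamma$ directly, and the tail of $I_{3,n}$ is controlled by the operator bound $\norm{\calA U_n}_2\leq\norm{U_n}_2$ together with the pointwise estimate from Remark~\ref{EM.NormPot}. You instead expand $\calL$ around the \emph{shock profile} $W_\sh$, which forces you to track $\wt{W}_n$ itself on the tails. This in turn requires two extra ingredients the paper avoids: Helly's theorem to upgrade weak convergence of $\wt{W}_n$ to strong local $\fspaceL^2$-convergence (so that the tail mass of $\wt{W}_n$ converges to $\Delta\gamma$ as $n\to\infty$ then $M\to\infty$), and the monotonicity bound $\abs{\calA\wt{W}_n\at\phase}\leq\abs{\wt{W}_n\at{\phase\mp\tfrac{1}{2}}}$ in place of the $\fspaceL^2$-contraction of $\calA$. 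Both routes use the uniform smallness of $\calA\wt{W}_n$ (or $\calA W_n-W_\sh$) on the tails to localise $\Phi^{\prime\prime}$ near $\pm1$; you obtain this uniformly in $n$ via the Chebyshev-type bound $\abs{\wt{W}_n\at\phase}\leq C/\sqrt{\abs\phase}$, whereas the paper obtains it for large $n$ via pointwise convergence of $\calA W_n$ at $\pm M$ combined with monotonicity. The paper's version is slightly more streamlined, but your argument is self-contained and equally valid.
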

\begin{proof}
We first prove the minimisation in $\calC$ which
consists of the following steps.
\begin{enumerate}
\item
Let $\at{W_n}_n\subset\calC$ be a minimising sequence, i.e.
$\calL\at{W_n}\to\inf\cal{L}|_\calC$. According to Corollary
\ref{Props.InvShifts} we can suppose $W_n\in\calC_0$, and Corollary
\ref{Props.Corr.Bounds} shows that the sequence
$\nat{\wt{W}_n}_n\subset\wt{\calC}_0$ with $\wt{W}_n=W_\sh-W_n$ is
bounded in $\fspaceL^2$. Hence we can extract a (not relabelled)
subsequence such that
\begin{align*}
\wt{W}_n\to\wt{W}_\infty
\quad \text{weakly in }%
\fspaceL^2,\quad\text{and}
\quad%
\ga_n:=\tfrac{1}{2}\norm{\wt{W}_n}^2_2
\to\bar\ga\geq\ga_\infty
:=%
\tfrac{1}{2}\norm{\wt{W}_\infty}_2
\end{align*}
for some weak limit $\wt{W}_\infty$ corresponding to
${W}_\infty=W_\sh-\wt{W}_\infty$. Moreover, we have
$\wt{W}_\infty\in\wt{\calC}_0$ and ${W}_\infty\in{\calC_0}$ as
$\wt{\calC}_0$ is closed under weak convergence. Setting
$U_n:=\wt{W}_n-\wt{W}_\infty={W}_n-{W}_\infty$ we find
\begin{align}
\label{EM.ExistenceTheo.Conv2}%
U_n\to0\quad\text{weakly in }\fspaceL^2,\quad\text{and}\quad
\tfrac{1}{2}\norm{U_n}^2_2\to\bar\ga-\ga_\infty,
\end{align}
which implies
\begin{align}
\label{EM.ExistenceTheo.Conv3}%
\calA{U}_n\to0
\quad\text{and}\quad%
\calA{W}_n\to\calA{W}_\infty
\quad\text{strongly and pointwise in }%
\fspaceL^2\at{\ccinterval{-M}{M}}
\end{align}
for each  $M<\infty$, see Lemma \ref{Props.AvOp}. With these
notations we write
\begin{align*}
\calL\at{W_n}-\calL\at{W_\infty}&=
\int\limits_\Rset\tfrac{1}{2}\Bat{\nat{W_\infty+ U_n}^2-W_\infty^2}
-\Bat{\Phi\nat{\calA{W}_\infty+\calA{U}_n}-
\Phi\at{\calA{W}_\infty}}\dint\phase
\\%
&=I_{1,\,n}+I_{2,\,n}-I_{3,\,n},
\end{align*}
where the $I_{i,\,n}$'s are given by
\begin{align*}
I_{1,\,n}&:=
\tfrac{1}{2}\int\limits_\Rset{U}_n^2\dint\phase,
\\%
I_{2,\,n}&:= \int\limits_\Rset{W_\infty}U_n-
\Phi^\prime\at{\calA{W}_\infty}\calA{U}_n\dint\phase
=\int\limits_\Rset\Bat{{W_\infty}-
\calA\Phi^\prime\at{\calA{W}_\infty}}U_n\dint\phase,
\\%
I_{3,\,n}&:= \int\limits_\Rset
\Phi_{\nat{\calA{W}_\infty}\at\phase}
\bat{\nat{\calA{U}_n}\at\phase}\dint\phase,
\end{align*}
and $\Phi_{\nat{\calA{W}_\infty}\at\phase}\at{\cdot}$ are pointwise
normalised potentials as in Remark \ref{EM.NormPot}.
\item
Towards an estimate for $I_{3,\,n}$, we fix $\eps>0$ sufficiently
small, and $M=M\at{\eps}$ sufficiently large, such that
\begin{align}
\label{EM.ExistenceTheo.Est4}
\sup\limits_{\abs\phase\geq{M}}\abs{W_\sh\at{\phase}-
\at{\calA{W}_\infty}\at{\phase}}\leq\tfrac{1}{2}\eps.
\end{align}
The convergence \eqref{EM.ExistenceTheo.Conv3} provides
$\calA{W}_n\at{\pm{M}}\to\calA{W}_\infty\at{\pm{M}}$,  hence
\begin{align}
\notag
\abs{\at{\calA{W}_\infty-\calA{W}_n}\at{-M}}+
\abs{\at{\calA{W}_\infty-\calA{W}_n}\at{M}}\leq\tfrac{1}{2}\eps
\end{align}
for all sufficiently large $n$, and since $W_n$ is
increasing we conclude that
\begin{align}
\label{EM.ExistenceTheo.Est5} \sup\limits_{\abs\phase\geq{M}}
\abs{W_\sh\at{\phase}-\at{\calA{W}_n}\at{\phase}} \leq\eps.
\end{align}
Combining \eqref{EM.ExistenceTheo.Est4} and
\eqref{EM.ExistenceTheo.Est5} with Remark \eqref{EM.NormPot} we find
\begin{align}
\label{EM.ExistenceTheo.Est1}
0\leq\Phi_{\nat{\calA{W}_\infty}\at\phase}
\bat{\nat{\calA{U}_n}\at\phase}
\leq%
\frac{\zeta_\eps}{2}\nat{\calA{U}_n}\at\phase^2
\end{align}
for $\abs\phase\geq{M}$ and all sufficiently large $n$, where
\begin{align*}
\zeta_\eps:=\sup\Big\{\Phi^{\prime\prime}\at{w}
\;:\;%
w\in\ccinterval{-1}{-1+\eps}\cup\ccinterval{1-\eps}{1}\Big\}.
\end{align*}
With \eqref{EM.ExistenceTheo.Est1} and
$\norm{\calA{U}_n}_2\leq\norm{U_n}_2$ we now estimate
\begin{align*}
0\leq{I}_{3,\,n}&
\leq%
\int\limits_{\abs{\phase}\leq{M}}
\Phi_{\at{\calA{W}_\infty}
\at\phase}\bat{\at{\calA{U}_n}\phase}\dint\phase+
\int\limits_{\abs{\phase}\geq{M}}
\Phi_{\at{\calA{W}_\infty}
\at\phase}\bat{\at{\calA{U}_n}\phase}\dint\phase
\\&\leq%
\tfrac{1}{2}\zeta_2\int\limits_{\abs{\phase}\leq{M}}
\nat{\calA{U}_n}\at\phase^2\dint\phase+
\tfrac{1}{2}\zeta_\eps
\int\limits_{\abs{\phase}\geq{M}}
\nat{\calA{U}_n}\at\phase^2\dint\phase
\\&\leq%
\tfrac{1}{2}{\zeta_2}
\norm{\at{\calA{U_n}}|_{\ccinterval{-M}{M}}}_2^2+
\tfrac{1}{2}\zeta_\eps\norm{U_n}^2_2,
\end{align*}
and exploiting \eqref{EM.ExistenceTheo.Conv3} and
\eqref{EM.ExistenceTheo.Conv2} we obtain
\begin{math}%
\limsup\limits_{n\to\infty}I_{3,\,n}
\leq{\zeta_\eps}\at{\bar\ga-\ga_\infty}
\end{math}. %
Since $\eps$ was chosen arbitrarily, we derive
\begin{align}
\label{EM.ExistenceTheo.Est2}
\limsup\limits_{n\to\infty}I_{3,\,n}&
\leq{\zeta_0}\at{\bar\ga-\ga_\infty},
\end{align}
where
\begin{math}%
\zeta_0=\max\{\Phi^{\prime\prime}\at{-1},\,
\Phi^{\prime\prime}\at{1}\}
\end{math} %
satisfies $0\leq\zeta_0<1$ on account of Assumption \ref{MainAss}.
\item%
Combining \eqref{EM.ExistenceTheo.Est2} with
$I_{1,\,n}\to\bar\gamma-\ga_\infty$ and $I_{2,\,n}\to0$ we find
\begin{align}
\label{EM.ExistenceTheo.IdA}
\lim\limits_{n\to\infty}\calL\at{W_n}
\geq%
\calL\at{W_\infty}+\at{1-\zeta_0}\at{\bar\ga-\ga_\infty}
\geq%
\calL\at{W_\infty},
\end{align}
and since $\at{W_n}_n$ is a minimising sequence, we also have
\begin{align}
\label{EM.ExistenceTheo.IdB}
\calL\at{W_\infty}\geq\lim\limits_{n\to\infty}\calL\at{W_n}.
\end{align}
This shows $\calL\at{W_\infty}
=%
\lim\limits_{n\to\infty}\calL\at{W_n}$, so
${W}_\infty$ is a minimiser of $\calL$ in $\calC$.
\end{enumerate}
Revising the proof so far yields the additional results: $(i)$
$\calL$ is weakly lower semi-continuous on $\wt{\calC}_0$ as
\eqref{EM.ExistenceTheo.IdA} holds even for non-minimising
sequences. $(ii)$ Concerning strongly convergent subsequences,
\eqref{EM.ExistenceTheo.IdA} and \eqref{EM.ExistenceTheo.IdB}
provide $\bar\ga=\ga_\infty$ and hence
$\norm{\wt{W}_n}_2\to\norm{\wt{W}_\infty}_2$, which in turn implies
that the weak convergence $\wt{W}_n-\wt{W}_\infty\to0$ is strong.
\end{proof}
We have now finished the existence proof for action-minimising
fronts with monotone profile. In particular, the second part of
Theorem~\ref{t:main} follows by combining Lemma \ref{Norm.Problem},
Remark \ref{Rem:Regularity}, Lemma \ref{MinimisersAndFronts} and
Theorem \ref{EM.ExistenceTheo}.
\bigpar
We conclude with two remarks about the necessity of conditions
\cond{S} and \cond{A}.
\begin{enumerate}
\item[(i)]
Suppose that $\Phi$ satisfies \cond{E} with strictly concave-convex
$\Phi^\prime$. Then the asymptotic states are subsonic with
$\Phi^{\prime\prime}\at{\pm1}>1$ and $g_\Phi$ is negative in
$\oointerval{-1}{1}$. Using \eqref{DefL2} we show that
\eqref{Props.Lemma.Bounds.Eqn1} holds for negative constants
$\ul{c},\ol{c}<0$, and since \eqref{Props.LBounds.Eqn1} still holds
we conclude that $\calL$ is bounded from above but unbounded from
below. In particular, action minimising fronts cannot exist and each
minimising sequence diverges via $\norm{\wt{W}}_2\to\infty$. On the
other hand, similar to the proof of Theorem \ref{EM.ExistenceTheo}
we can show that $\calL$ attains its maximum in $\calC$. However, we 
cannot conclude that the maximiser solves the front equation,
compare Remark \ref{Remark.LocMax}, and numerical simulations
indicate that the maximiser is not a front.
\item[(ii)]
More generally, suppose \cond{A} fails because there exist $-1<\hat{w}<+1$
with $g_\Phi\at{\hat{w}}<0$, and define the sequence $W_n\in\calC$
with extending plateau at $\hat{w}$ by $W_n\at\phase=\sgn{\phase}$
for $\abs{\phase}\geq{n}$ and $W_n\at\phase=\hat{w}$ for
$\abs{\phase}<{n}$. Then \eqref{DefL2} and
\eqref{Props.LBounds.Eqn1} imply $\calL\at{W_n}\to-\infty$, so
action-minimising fronts cannot exist. Of course, this does not
disprove the existence of local minimisers of $\calL$, but the
numerical simulation in \S\ref{s:num} indicate that these do not
exist.
\end{enumerate}
%
%
%
\section{Qualitative properties and numerical computation of fronts}
\label{s:qualiter}
%
\subsection{Qualitative properties of fronts}\label{s:qual}
%
In this section we discuss some aspects concerning the shape of
front profiles $W$, which hold for any bounded and \emph{monotone}
solution $W=W_\sh-\wt{W}$ to the front equation
$W=\calA\Phi^\prime\at{\calA{W}}$. Recall that such solutions belong
to  $\fspaceC^2$ as noted in Remark~\ref{Rem:Regularity}.
\bigpar
Front profiles are actually strictly monotone for strictly convex
$\Phi$ for the following reason: First we note that there exists at
least one $\phase_0$ such that $W^\prime\at{\phase_0}>0$, and with
$W^\prime\geq0$ this implies $\at{{\calA}W^\prime}\at{\phase}>0$ for
all
$\phase\in\oointerval{\phase_0-\tfrac{1}{2}}{\phase_0+\tfrac{1}{2}}$.
Moreover, the front equation implies
\begin{align*}
{W}^\prime=
\calA\bato{\Phi^{\prime\prime}\at{\calA{W}}{\calA}{W}^\prime},
\end{align*}
and since $\Phi^{\prime\prime}>0$ we infer that
$W^\prime\at{\phase}>0$ for all
$\phase\in\oointerval{\phase_0-1}{\phase_0+1}$. Finally, by
iterating this argument we find $W^{\prime}\at\phase>0$ for all
$\phase\in\Rset$.
\bigpar
We next study the convergence to the asymptotic states.
Heuristically, the convergence to the constant states $w=\pm1$ is
dictated by the linearisation in the asymptotic states, that is
$\wt{W}=\la_\pm\calA^2\wt{W}$, with
$\la_+=\Phi^{\prime\prime}\at{1}$ and
$\la_-=\Phi^{\prime\prime}\at{-1}$ for right and left tails,
respectively. Differentiating twice with respect to $\phase$ we find
the linear advance-delay differential equation
\begin{align*}
\wt{W}'' = \la_\pm\Delta\wt{W},
\end{align*}
with discrete Laplacian
\begin{math}
\nat{\Delta\wt{W}}\at\phase=
\wt{W}\at{\phase+1}+\wt{W}\at{\phase-1}-2\wt{W}{\at\phase}
\end{math}. %
The exponential ansatz $\wt{W}(\vph)=\rme^{-\tau \vph}$
gives the characteristic equation
\begin{align*}
\tau^2 = 2\la_\pm\at{\cosh\at{\tau} -1},
\end{align*}
where decay corresponds to solutions $\tau$ with negative real part
and monotonicity requires $\tau\in\R$. Since supersonic speed
implies $0<\la_\pm<1$ in the normalised potential it is
straightforward to see that the only nonnegative real roots of the
characteristic equation are the double root $\tau=0$ and a simple
root $\tau_\pm>0$.
\par
In conclusion, for monotone $W$ the function $\wt{W}$ is expected to
decay exponentially with rate $\tau_+$ and $\tau_-$ as
$\phase\to+\infty$ and $\phase\to-\infty$, respectively. We prove a
weaker variant of this heuristic result for $\phase\to\infty$, and
mention that an analogous result characterizes the decay for
$\phase\to-\infty$.
\begin{lemma}
\label{DecLemma:ExpTail} %
Let $W$ be a monotone solution to
\eqref{Norm.Problem.Eqn}. Then, for all $\ul{\tau}$,
$\ol{\tau}$ with $\ul\tau<\tau_+<\ol\tau$ there exist positive
constants $\ul{c}$ and $\ol{c}$ such that
\begin{align}
\label{DecLemma:ExpTail.Formula}
\ul{c}\exp\at{-\ol\tau\phase}
\leq%
{\wt{W}\at{\phase}}\leq\ol{c}\exp\at{-\ul\tau\phase}
\end{align}
for all $\phase\geq0$.
\end{lemma}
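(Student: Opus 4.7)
The plan is to linearise the fixed-point equation at the right asymptotic state $w=1$ and then sandwich $\wt W$ between exponential barriers by a maximum-principle argument on the resulting advance-delay inequality. Set $\la:=\Phi^{\prime\prime}\at{1}$ and write $a\at{w}:=\int_0^1\Phi^{\prime\prime}\bat{t+\at{1-t}w}\,\rmd t$, so that $\Phi^\prime\at{1}-\Phi^\prime\at{w}=a\at{w}\at{1-w}$ and $a\at{1}=\la$. Since $\calA W_\sh\equiv 1$ on $\cointerval{1/2}{\infty}$ and $\Phi^\prime\at{1}=1$, the equation $W=\calA\Phi^\prime\at{\calA W}$ rearranges into
\begin{align*}
\wt W\at\phase \;=\; \calA\bat{a\at{\calA W}\,\calA\wt W}\at\phase\qquad\text{for }\phase\ge 1.
\end{align*}
Monotonicity of $W$ yields $\wt W,\calA\wt W\ge 0$ on the right tail, and $\calA W\to 1$ pointwise; continuity of $\Phi^{\prime\prime}$ then gives, for each $\delta>0$, some $\phase_\delta$ with
\begin{align*}
\at{\la-\delta}\calA^2\wt W\at\phase \;\le\; \wt W\at\phase \;\le\; \at{\la+\delta}\calA^2\wt W\at\phase\qquad\text{for }\phase\ge\phase_\delta.
\end{align*}
A direct computation gives $\calA^2\rme^{-\tau\phase}=\at{2\sinh\at{\tau/2}/\tau}^2\rme^{-\tau\phase}$, so the characteristic equation $\tau^2=2\la\at{\cosh\tau-1}$ is equivalent to $\la\at{2\sinh\at{\tau/2}/\tau}^2=1$; the left-hand side equals $\la<1$ at $\tau=0$, equals $1$ at $\tau=\tau_+$, and is strictly less than $1$ on $\oointerval{0}{\tau_+}$ and strictly greater than $1$ on $\oointerval{\tau_+}{\infty}$.

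For the upper bound, fix $\ul\tau\in\oointerval{0}{\tau_+}$ and pick $\delta>0$ so small that both $\la+\delta<1$ and $\mu^+:=\at{\la+\delta}\at{2\sinh\at{\ul\tau/2}/\ul\tau}^2<1$; this is possible by continuity because $\la<1$ by \cond{S}. Choose $\ol c$ large so that the barrier $\phi\at\phase:=\ol c\,\rme^{-\ul\tau\phase}$ dominates $\wt W$ on $\ccinterval{\phase_\delta-1}{\phase_\delta}$, which is possible by boundedness of $\wt W$. Set $h:=\wt W-\phi$; then $h\to 0$ at $+\infty$ and $h\le 0$ on $\ccinterval{\phase_\delta-1}{\phase_\delta}$, so if $\sup h>0$ the supremum is attained at some $\phase_0>\phase_\delta$. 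Using $\at{\la+\delta}\calA^2\phi=\mu^+\phi$ together with the upper linearised inequality at $\phase_0$,
\begin{align*}
h\at{\phase_0}\;\le\;\at{\la+\delta}\calA^2 h\at{\phase_0}+\at{\mu^+-1}\phi\at{\phase_0}\;\le\;\at{\la+\delta}h\at{\phase_0}+\at{\mu^+-1}\phi\at{\phase_0},
\end{align*}
where the last step uses $\calA^2 h\at{\phase_0}\le h\at{\phase_0}$ at a maximum. Rearranging with $1-\la-\delta>0$ and $\mu^+-1<0$ gives $h\at{\phase_0}<0$, the desired contradiction. The lower bound is symmetric: given $\ol\tau>\tau_+$ choose $\delta>0$ small so that $\mu^-:=\at{\la-\delta}\at{2\sinh\at{\ol\tau/2}/\ol\tau}^2>1$, pick $\ul c>0$ small so that $\psi\at\phase:=\ul c\,\rme^{-\ol\tau\phase}\le\wt W$ on $\ccinterval{\phase_\delta-1}{\phase_\delta}$, and apply the same argument to $k:=\wt W-\psi$ at a putative negative infimum, using $\calA^2 k\at{\phase_0}\ge k\at{\phase_0}$ at a minimum, to reach $k\at{\phase_0}\at{1-\la+\delta}\ge\at{\mu^--1}\psi\at{\phase_0}>0$, contradicting $k\at{\phase_0}<0$.

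The main obstacle I anticipate lies in arranging the two constraints on $\delta$ simultaneously: the supersonic side-bound $\la+\delta<1$ needed to invert $1-\la-\delta$ and the characteristic-rate side-bound pushing $\mu^\pm$ to the correct side of $1$. Both are enabled precisely by $\la<1$ from Assumption \ref{MainAss}\,\cond{S}; without \cond{S} the barrier comparison collapses, which is consistent with the heuristic that supersonic speed is what makes a monotone tail possible in the first place. A secondary technical point is verifying strict positivity of $\wt W$ on the right tail so that $\ul c>0$ can be chosen for the lower barrier; this follows from the strict monotonicity of $W$ discussed at the start of \S\ref{s:qual}.
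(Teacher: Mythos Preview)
Your argument is correct and takes a genuinely different route from the paper's. Both proofs start from the same linearised sandwich $(\la-\delta)\calA^2\wt W\le\wt W\le(\la+\delta)\calA^2\wt W$ for large $\phase$ and both use the eigenfunction identity $\calA^2\rme^{-\tau\phase}=\varrho(\tau)\rme^{-\tau\phase}$, but the mechanisms differ. The paper proceeds in two stages: it first extracts a crude exponential bound by iterating the map $\wt W\mapsto(\la+\delta)\calA^2\wt W$ starting from the algebraic estimate $\wt W(\phase)\le\|\wt W\|_2\,\phase^{-1/2}$ (which follows from monotonicity and $\wt W\in\fspaceL^2$), and then bootstraps the rate towards $\tau_+$ by repeatedly applying the implication ``exponential decay at rate $\tau$'' $\Rightarrow$ ``exponential decay at rate $\tau-\ln\bat{\tfrac12(\la\varrho(\tau)+1)}$''. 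You instead run a one-shot comparison: choose the exponential barrier with the desired rate, match it to $\wt W$ on a unit buffer interval, and derive a contradiction at a putative interior extremum of the difference using $\calA^2h\le h$ at a maximum. Your approach is shorter and avoids the two-stage iteration; the paper's approach is more constructive and does not rely on attaining a supremum, so it would work verbatim in lower regularity. Your observation that $\la+\delta<1$ is needed to invert the coefficient in the rearrangement is exactly the place where \cond{S} enters, and your remark about strict positivity of $\wt W$ for the lower barrier is apt: it follows either from strict monotonicity as you note, or directly from the fixed-point equation together with $\la_+>0$ (which is implicit in the existence of $\tau_+$), since $W(\phase^\ast)=1$ at a finite $\phase^\ast$ would force $\calA\Phi^\prime(\calA W)<1$ there.
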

The recently established existence of centre-stable manifolds for
nonlinear advance-delay differential equations, \cite{Geo08} Theorem
5.1, shows that nonlinear decay is precisely described by the linear
decay except for the centre directions. Since $\tau_+$ is an
algebraically simple eigenvalue there is no secular growth and so
Lemma \ref{DecLemma:ExpTail} in fact implies that
\eqref{DecLemma:ExpTail.Formula} holds for
$\ol{\tau}=\ul{\tau}=\tau_+$ and more precisely that there are
constants $a\neq 0$,  $\nu > 0$ such that
\begin{align*}
\wt{W}(\phase) = a\exp(-\tau_+\phase) + \calO(\exp(-(\tau_++
\nu)\phase).
\end{align*}
Our point, however, is to obtain the decay rate using a direct
approach with a rather simple proof. Not surprisingly, the same
rates of decay describe the tails of small amplitude supersonic
solitons in \cite{FP99} Indeed, Lemma \ref{DecLemma:ExpTail}
requires monotonicity only in the tails and therefore also applies
to all supersonic solitons with monotone tails, as these solve
\eqref{Norm.Problem.Eqn} after a suitable rescaling, compare
\cite{Her09}.
\begin{proof}
We first show the existence of both exponentially decaying lower and
upper bounds for $\wt{W}$ and prove afterwards that $\ul\tau$ and
$\ol\tau$ can be chosen arbitrary close to $\tau_+$.
\begin{enumerate}
\item
By Taylor-expansions of $\Phi$ around $w=1$ and due to
$\nat{\calA\wt{W}}\at\phase\to0$ as $\phase\to\infty$ we find for
each $0<\delta<1$ some $\phase_\delta$ such that
\begin{align}
\label{expTail.Eqn1}
\lambda_+\at{1-\delta}\nat{\calA^2\wt{W}}\at{\phase}
\leq%
\wt{W}\at\phase\leq\lambda_+\at{1+\delta}
\nat{\calA^2\wt{W}}\at{\phase}
\quad\text{for all}\quad\phase\geq\phase_\delta.
\end{align}
Moreover, since $\wt{W}$ is monotonically decreasing for
$\phase\geq0$ we have
\begin{math}
\norm{\wt{W}}_2^2%
\geq\int_{0}^{\phase}\wt{W}
\at{\tilde{\phase}}^2\dint{\tilde{\phase}}
\geq\phase\wt{W}\at{\phase}^2,
\end{math}
and hence
\begin{align}
\label{expTail.Eqn2}
0\leq\wt{W}\at\phase
\leq%
\norm{\wt{W}}_2{\phase}^{-1/2}.
\end{align}
\item
An important building block for the upper bound is the implication
\begin{align}
\label{expTail.Eqn3} %
U\at{\phase}\leq{C}{\phase}^{-1/2}\quad\forall\;\phase>0
\qquad\Longrightarrow\qquad \at{\calA^2{U}}\at{\phase+1}
\leq%
{C}{\phase}^{-1/2}\quad\forall\;\phase>0,
\end{align}
which follows from a direct computation of
$\calA^2\ato{\phase^{-1/2}}$. For fixed $\delta$ and
$\phase\geq\phase_\delta$ we combine the previous results as
follows: \eqref{expTail.Eqn2} and \eqref{expTail.Eqn3} imply
\begin{math}
\nat{\calA^2\wt{W}}\at{\phase+1}
\leq%
{\norm{\wt{W}}_2}{\phase}^{-1/2}
\end{math}
and with \eqref{expTail.Eqn1} we find
\begin{align*}
0
\leq%
\wt{W}\at{\phase+1}
\leq%
\la_+\at{1+\delta}{\norm{\wt{W}}_2}\,{\phase}^{-1/2}.
\end{align*}
Using \eqref{expTail.Eqn3} for $U\at\phase=\wt{W}\at{\phase+1}$ we
obtain
\begin{math}
\nat{\calA^2\wt{W}}\at{\phase+2}
\leq%
\la_+\at{1+\delta}{\norm{\wt{W}}_2}{\phase}^{-1/2},
\end{math}
and \eqref{expTail.Eqn1} yields
\begin{align*}
0\leq\wt{W}\at{\phase+2}
\leq%
\la_+^2\at{1+\delta}^2{\norm{\wt{W}}_2}\,{\phase}^{-1/2}.
\end{align*}
Iterating these arguments gives
\begin{align*}
0\leq\wt{W}\at{\phase+n}
\leq%
\la_+^n\at{1+\delta}^{n}{\norm{\wt{W}}_2}\,{\phase}^{-1/2}\quad
\forall\;\phase\ge\phase_\delta\;\forall\;n\in\Nset,
\end{align*}
and since $\delta$ was arbitrary we have established the upper
estimate in \eqref{DecLemma:ExpTail.Formula} for some $\ul\tau$ with
$\ul\tau>-\ln\lambda_+>0$ and some sufficiently large $\ol{c}$.
\item
Concerning a lower bound, the monotonicity of $\wt{W}$ for
$\phase>0$ implies
\begin{math}
\nat{\calA\wt{W}}\at{\phase+\tfrac{1}{4}}
\geq%
\tfrac{1}{4}\wt{W}\at\phase,
\end{math} %
and therefore
\begin{math}
\nat{\calA^2\wt{W}}\at{\phase+\tfrac{1}{2}}
\geq%
\tfrac{1}{16}\wt{W}\at\phase.
\end{math}
Combination with \eqref{expTail.Eqn1} gives
\begin{align*}
\wt{W}\at{\phase+\tfrac{1}{2}}
\geq\tfrac{1}{16}\at{\lambda_+\at{1-\delta}}\wt{W}\at\phase
\quad
\forall\;\phase \ge \phase_\delta,
\end{align*}
and by iteration we show the existence of an
exponentially decaying lower bound.
\item
Using the previous step we now assume that
\eqref{DecLemma:ExpTail.Formula}
holds for some constants $\ul{c}$ , $\ol{c}$, and some rates
$0<\ul{\tau}<\tau_+<\ol\tau$, and show that these rates can be
improved. In relation to the above characteristic equation we define
the function
\begin{math}
\varrho\at\tau:=2\tau^{-2}\at{\cosh\at\tau-1}
\end{math}
and observe that
$\calA^2\ato{\rme^{-\tau\phase}}=\rho\at\tau\rme^{-\tau\phase}$.
Hence, for all $\tau>0$ and $\phase_\ast>0$ we have
\begin{align}
\label{expTail.Eqn7}%
U\at{\phase}\leq{C}\exp\at{-\tau\phase}
\quad\forall\;\phase>\phase_\ast\qquad\Longrightarrow\qquad
\at{\calA^2{U}}\at{\phase}\leq{C}\varrho\at{\tau}\exp\at{-\tau\phase}
\quad\forall\;\phase>\phase_\ast+1.
\end{align}
Due to \eqref{expTail.Eqn1} and using an iteration argument similar to
the above one we arrive at
\begin{align}
\label{expTail.Eqn8}%
\wt{W}\at{\phase} \leq{C}
\at{\la_+\at{1+\delta}\varrho\at\tau}^n\exp\at{-\tau\phase}
\qquad\forall\;\phase\geq\phase_\delta+n\;\forall\;n\in\Nset,
\end{align}
and since $\wt{W}$ is decreasing this implies
\begin{align*}
\wt{W}\at{\phase_\delta+\phase}
\leq
\tilde{C}\at{\la_+\at{1+\delta}\varrho\at\tau}^\phase\exp\at{-\tau\phase}
\end{align*}
for all $\delta>0$, $\tau>0$, $\phase\geq0$ and some constant
$\tilde{C}$. Therefore we can improve the assumed decay rate
$\ul\tau$ provided that $\la_+ \varrho\at{\ul\tau}<1$, that means
$\ul{\tau}<\tau_+$. More precisely, choosing $\delta$ sufficiently
small and adapting the constant $\ol{c}$ we derive from
\eqref{expTail.Eqn8} that the upper estimate in
\eqref{DecLemma:ExpTail.Formula} also holds for the new rate
\begin{align*}
\ul{\ul\tau}=\ul\tau-\ln\at{\tfrac{1}{2}\at{\la_+\varrho\at{
\ul\tau}+1}}> \ul\tau.
\end{align*}
Since the iteration of the map $\ul\tau\mapsto\ul{\ul\tau}$ yields a
strictly increasing sequence that converges to $\tau_+$, we conclude
that $\ul\tau$ can be chosen arbitrarily close to $\tau_+$. Finally,
reversing the inequality signs on both sides of the implication
\eqref{expTail.Eqn7} we can show that $\ol\tau$ can also be chosen
arbitrarily close to $\tau_+$.
\end{enumerate}
\end{proof}
%
%
\subsection{Numerical computation of fronts}
\label{s:num}
%
%
In view of Theorem \ref{EM.ExistenceTheo} it seems natural to
approximate fronts by using some discrete counterpart of the
gradient flow of $\calL$, as for instance the explicit Euler scheme
\eqref{Gradientflow.Scheme}.
\par
A corresponding numerical scheme on a finite interval is readily
derived and implemented, but from the rigorous point of view the
account of such a scheme is limited: $(i)$ There is no convergence
proof. $(ii)$ Due to the lack of uniqueness results it is not clear
whether or not our existence result covers all fixed points of the
scheme.
\par
Nevertheless, such schemes work very well numerically and provide
moreover some intuitive understanding for why energy conservation,
supersonic front speed, and area condition are necessary for the
existence of action minimising fronts.
\bigpar
We start with functions $\Phi^\prime$ that have exactly one turning
point. A positive example with admissible potential $\Phi$, initial
profile $W_0=W_\sh$, and $\lambda=1$ is plotted in Figure
\ref{NumFig:Ex1}. After a small number of iterations the profile $W$
becomes stationary and satisfies the front equation up to high
order; the same qualitative behaviour can be observed for other
$0\leq\lambda\leq{1}$ and other initial data $W_0\in\calC$.
\begin{figure}[ht!]%
  \centering{%
  \begin{minipage}[c]{0.275\textwidth}%
  \includegraphics[width=\textwidth, draft=\figdraft]%
  {\figfile{ex1_forces}}%
  \end{minipage}%
  \begin{minipage}[c]{0.675\textwidth}%
  \includegraphics[width=0.5\textwidth, draft=\figdraft]%
  {\figfile{ex1_data}}%
  \includegraphics[width=0.5\textwidth, draft=\figdraft]%
  {\figfile{ex1_error}}%
  \end{minipage}%
  }%
  \caption{%
        Example $1$ with $\Phi$ as in Assumption
        \ref{MainAss}: The profiles converge to a front.
  }%
  \label{NumFig:Ex1}%
  \medskip%
  \centering{%
  \begin{minipage}[c]{0.275\textwidth}%
  \includegraphics[width=\textwidth, draft=\figdraft]%
  {\figfile{ex2_forces}}%
  \end{minipage}%
  \begin{minipage}[c]{0.675\textwidth}%
  \includegraphics[width=0.5\textwidth, draft=\figdraft]%
  {\figfile{ex2_data_1}}%
  \includegraphics[width=0.5\textwidth, draft=\figdraft]%
  {\figfile{ex2_data_2}}%
  \end{minipage}%
  }%
  \caption{%
        Example $2$ with $\Phi\at{1}\neq\Phi\at{-1}$:
        The profiles minimise the action through subsequent
        shifts and converge to a constant.
  }%
  \label{NumFig:Ex2}%
  \medskip%
  \centering{%
  \begin{minipage}[c]{0.275\textwidth}%
  \includegraphics[width=\textwidth, draft=\figdraft]%
  {\figfile{ex3_forces}}%
  \end{minipage}%
  \begin{minipage}[c]{0.675\textwidth}%
  \includegraphics[width=0.5\textwidth, draft=\figdraft]%
  {\figfile{ex3_data_1}}%
  \includegraphics[width=0.5\textwidth, draft=\figdraft]%
  {\figfile{ex3_data_2}}%
  \end{minipage}%
  }%
  \caption{%
        Example $3$ with $\Phi^{\prime\prime}\at{\pm1}>1$:
    The profiles converge to a
        constant via an extending plateau.
  }%
  \label{NumFig:Ex3}%
\end{figure}%
\par
In Figure \ref{NumFig:Ex2} we plot the result for the same scheme
applied to a potential that does not satisfy the energy
conservation.  Here the profiles do not converges to a front
solution, but instead form a travelling wave for the iteration:
After some initial iterations we find a stationary profile that is
shifted in each step. Recall that in this case the action functional
$\calL$ is not invariant under shifts, and thus the profiles
successively decrease their action by converging to the asymptotic
state with higher potential energy. More precisely, assuming
$\calT\at{W}=W\at{\cdot+\bar{\phase}}$ and exploiting
\eqref{Props.InvShifts.Proof1} we find
\begin{math}
0\leq\triangle\calL=\calL\at{W}-\calL\at{\calT\ato{W}}=
\bar{\phase}\at{\Phi\at{1}-\Phi\at{-1}},
\end{math} %
and hence $\sgn\at{\bar\phase}=\sgn\bat{\Phi\at{1}-\Phi\at{1}}<1$.
\par
In order to understand the role of supersonic fronts we next
consider potentials that conserve the energy with concave-convex
$\Phi^\prime$. Such potentials are prototypical for subsonic
conservative shocks and have unbounded $\calL$ as discussed at the
end of \S\ref{s:main}. Figure \ref{NumFig:Ex3} illustrates that the
iteration scheme generates a plateau with increasing length and two
counterpropagating travelling waves that connect the plateau to the
asymptotic states. The height of the plateau is the unique solution
to $\bar{w}=\Phi^\prime\at{\bar{w}}$ with $-1<\bar{w}<1$, and as
above we conclude that the decrease in $\calL$ is given by
\begin{math}
\triangle\calL
=%
\bar{\phase}_1\at{\Phi\at{\bar{w}}-\Phi\at{-1}}+
\bar{\phase}_2\at{\Phi\at{1}-\Phi\at{\bar{w}}},
\end{math} %
where $\bar{\phase}_1>0$ and $\bar{\phase}_2<0$ are the pase shifts
for the travelling waves.
\begin{figure}[ht!]%
  \centering{%
  \begin{minipage}[c]{0.275\textwidth}%
  \includegraphics[width=\textwidth, draft=\figdraft]%
  {\figfile{ex4_forces}}%
  \end{minipage}%
  \begin{minipage}[c]{0.675\textwidth}%
  \includegraphics[width=0.5\textwidth, draft=\figdraft]%
  {\figfile{ex4_data}}%
  \includegraphics[width=0.5\textwidth, draft=\figdraft]%
  {\figfile{ex4_error}}%
  \end{minipage}%
  }%
  \caption{%
        Example $4$ with area condition:
        The profiles converge to a front.
  }%
  \label{NumFig:Ex4}%
  \medskip%
  \centering{%
  \begin{minipage}[c]{0.275\textwidth}%
  \includegraphics[width=\textwidth, draft=\figdraft]%
  {\figfile{ex5_forces}}%
  \end{minipage}%
  \begin{minipage}[c]{0.675\textwidth}%
  \includegraphics[width=0.5\textwidth, draft=\figdraft]%
  {\figfile{ex5_data_1}}%
  \includegraphics[width=0.5\textwidth, draft=\figdraft]%
  {\figfile{ex5_data_2}}%
  \end{minipage}%
  }%
  \caption{%
        Example $5$ without area condition:
        The profiles generate an extending plateau.
  }%
  \label{NumFig:Ex5}%
\end{figure}%
\bigpar
Finally, we study two potentials that both have three turning points
in the distance jump of supersonic conservative shock data. The
numerical simulations in Figure \ref{NumFig:Ex4} and Figure
\ref{NumFig:Ex5} indicate that the signed area condition is truly
necessary for the existence of action minimising fronts. If this
condition fails the profiles create again an extending plateau and
two counter-propagating travelling waves, where the plateau height
$\bar{w}$ satisfies $\Phi^\prime\at{\bar{w}}=\bar{w}$ and
$\Phi^{\prime\prime}\at{w}<1$.
\section*{Acknowlegdements}%
This work has been supported in part by the DFG Priority Program
1095 ``Analysis, Modeling and Simulation of Multiscale Problems''
(M.H., J.R.), and the NDNS+ cluster of the Netherlands Organisation for Scientific Research NWO (J.R.).
We thank the reviewers for their insightful comments and for pointing us to additional references.
\providecommand{\bysame}{\leavevmode\hbox to3em{\hrulefill}\thinspace}
\providecommand{\MR}{\relax\ifhmode\unskip\space\fi MR }
\providecommand{\MRhref}[2]{%
  \href{http://www.ams.org/mathscinet-getitem?mr=#1}{#2}
}
\providecommand{\href}[2]{#2}

\end{document}